\patchcmd{\section}{\normalfont}{\normalfont\color{darkblue}}{}{}
\patchcmd{\subsection}{\normalfont}{\normalfont\color{darkblue}}{}{}
\patchcmd{\subsubsection}{\normalfont}{\normalfont\color{darkblue}}{}{}
\definecolor{darkblue}{rgb}{0.0, 0.0, 0.55}
\newtheorem{lemma}{Lemma}
\newtheorem{theorem}{Theorem}
\newtheorem{corollary}{Corollary}
\theoremstyle{remark}
\newtheorem{remark}{Remark} 
\theoremstyle{theorem}
\def\R{\mathbb R}
\def\N{\mathbb N}
\def\p{\partial}
\DeclareMathOperator{\supp}{supp}
\DeclareMathOperator{\dist}{dist}
\newcommand{\norm}[1]{\left\|#1 \right\|}
\renewcommand{\tilde}{\widetilde}
\renewcommand{\leq}{\leqslant}
\renewcommand{\geq}{\geqslant}
\title[recovering time-dependent coefficients with one measurement]{Global Recovery of a time-dependent coefficient for the Wave equation from a single measurement}
\author[A. Feizmohammadi]{Ali Feizmohammadi}
\address{Department of Mathematics, University College London, 
Gower Street, London UK, WC1E 6BT.}
\email{a.feizmohammadi@ucl.ac.uk}
\author[Y. Kian]{Yavar Kian}
\address{Department of Mathematics, Aix Marseille Universit\'{e}, Universit\'{e} de Toulon, CNRS, CPT, Marseille, France}
\email{yavar.kian@univ-amu.fr}
\begin{document}

\begin{abstract}
We consider the formally determined inverse problem of recovering an unknown time-dependent potential function from the knowledge of the restriction of the solution of the wave equation to a small subset, subject to a single external source. We show that one can determine the potential function, up to the natural obstruction for the problem, by using a single source placed in the exterior of the spacetime domain and subsequently measuring the solution in a small neighborhood outside of the spacetime domain. The approach is based on considering a dense collection of light rays and constructing a source function that combines a countable collection of sources that each generates a wave packet near a light ray in the collection. We show that measuring the solution corresponding to that single source simultaneously determines the light ray transform along all the light rays in the collection. The result then follows from injectivity of the light ray transform. Our proof also provides a reconstruction algorithm.
\end{abstract}
\maketitle
{
  \hypersetup{linkcolor=black}
  \tableofcontents
}

\section{Introduction and outline of the method}
\label{intro}
Let $T>0$ and $\Omega\subset\subset\tilde{\Omega}$ be domains in $\R^n$ with smooth boundaries. We assume $n\geq 2$. Given any 
$$ f\in L^2(\R^{1+n})\quad \text{with} \quad \supp f \subset (0,T)\times (\tilde{\Omega}\setminus\overline{\Omega}),$$
we consider the wave equation
 \begin{equation}\label{pf}
\begin{aligned}
\begin{cases}
(\Box+V(t,x))u=f(t,x),
& (t,x) \in (0,T)\times \R^n,
\\
u(0,x)=\p_tu(0,x)=0,\,
& x \in \R^n
\end{cases}
    \end{aligned}
\end{equation}
where $\Box=\partial_t^2-\Delta_x$ is the wave operator and $V\in L^\infty((0,T)\times \Omega)$ is an a priori unknown function. This problem admits a unique solution $u$ in the energy space
\begin{equation}
\label{sobolev_space}
\mathcal C^1([0,T];L^2(\R^n)) \cap \mathcal C([0,T];H^1(\R^n)).
\end{equation}
Moreover, $u(t,\cdot)$ is compactly supported for each $t \in [0,T]$ and the following bounds hold:
\begin{equation}
\label{energy}
\norm{u}_{\mathcal C^1(0,T;L^2(\R^n))} + \|u\|_{\mathcal C(0,T;H^1(\R^n))}\leq C \|f\|_{L^2((0,T)\times (\tilde{\Omega}\setminus\overline{\Omega}))},
\end{equation}
where $C$ is a positive constant depending on the geometry and $\|V\|_{L^2((0,T)\times \Omega)}$.

In the present paper we consider the following natural inverse problem; Does there exist a universal source function $f \in L^2((0,T)\times (\tilde{\Omega}\setminus\overline{\Omega}))$, only depending on $T$, $\Omega$ and $\tilde \Omega$, such that the knowledge of $u$ restricted to $(0,T)\times \mathcal O$, with $\mathcal O\subset \tilde{\Omega}\setminus\overline{\Omega}$ an open subset, determines uniquely the unknown potential $V$?

\subsection{Main results}
There is a natural obstruction to uniqueness for the potential $V$. Namely, due to finite speed of propagation for the wave equation, the knowledge of $u_{|(0,T)\times(\tilde{\Omega}\setminus\overline{\Omega})}$ contains no information about the potential on the set
$$ \{ (t,x) \in (0,T)\times \Omega\,|\, 0<t<\dist{(x,\p \Omega)}\quad \text{or}\quad T-\dist{(x,\p\Omega)}<t<T\}.$$
We refer the reader to \cite[Section 1.1]{Kian1} for more details. Thus, the optimal domain for recovering the potential function will be the complement of this set that is given by
$$\mathcal D=  \{(t,x) \in (0,T)\times \Omega\,|\, \dist{(x,\p \Omega)}<t<T-\dist{(x,\p\Omega)}\}.$$

This paper is concerned with the resolution of the question posed above in the optimal set $\mathcal D$. We remark that the main complexity of this single source inverse problem stems from the fact that it is a formally determined inverse problem. Heuristically, given any fixed source function $f$, the solution $u$ to \eqref{pf} and the unknown potential $V$ are both functions of $1+n$ variables. Our main result can be stated as follows.

\bigskip
\begin{theorem}
\label{t0}
Let $\Omega \subset\subset \tilde\Omega$ be domains in $\R^n$ with smooth strictly convex boundaries and let $T>$Diam$(\Omega)$. Then there exists a function $f \in L^2(\R^{1+n})$, with $\supp f \subset (0,T)\times (\tilde{\Omega}\setminus\overline{\Omega})$, such that given any 
\begin{equation}\label{V_space} V_j \in \mathcal C^4([0,T]\times\R^n)\cap \mathcal C([0,T];\mathcal C^4_0(\Omega)),\quad j=1,2,\end{equation} 
the following injectivity result holds,
\begin{equation}\label{t0a}u_1=u_2,\quad \text{on $(0,T)\times(\tilde{\Omega}\setminus\overline{\Omega})$}\quad\Longrightarrow \quad V_1=V_2 \quad \text{on $\mathcal D$}.\end{equation}
Here, $u_j$, $j=1,2$, is the unique solution to the wave equation \eqref{pf} in energy space \eqref{sobolev_space} subject to $V=V_j$ and source term $f$. 
\end{theorem}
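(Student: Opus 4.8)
The plan is to realize a countable dense family of light rays through $\Omega$ by means of a single $L^2$ source that superposes high-frequency wave packets, to read off the light ray transform of $V$ along each ray of the family from the single measurement, and then to invoke injectivity of the light ray transform to recover $V$ on $\mathcal D$.

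First I would fix the geometry. Since $\Omega$ is bounded and strictly convex, every line meeting $\Omega$ crosses $\partial\Omega$ transversally in a chord of length at most $\mathrm{Diam}(\Omega)$; as $T>\mathrm{Diam}(\Omega)$, a suitable translation in time places the entry into $\Omega$, the full traversal, and a short segment in $\tilde\Omega\setminus\overline\Omega$ on either side of $\Omega$ all within the open interval $(0,T)$. Call such light rays \emph{admissible}; they form an open family, and I fix a countable dense subfamily $\{\gamma_k\}_{k\geq1}$, with $\phi_k(t,x)=t-\omega_k\cdot x$ the associated null phase, whose transport operator $2(\partial_t+\omega_k\cdot\nabla)$ differentiates along $\gamma_k$. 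For each $k$ and a large frequency $\lambda_k$ (to be chosen), a geometric optics construction produces a source $f_k$, \emph{independent of $V$} and supported in a small subset of $(0,T)\times(\tilde\Omega\setminus\overline\Omega)$ on the entry segment of $\gamma_k$, such that for any $V$ as in \eqref{V_space} the solution $u_{V,k}$ of \eqref{pf} with source $f_k$ is a wave packet concentrated in a thin tube about $\gamma_k$ which, in the exterior segment $W_k\subset(0,T)\times(\tilde\Omega\setminus\overline\Omega)$ traversed by $\gamma_k$ after leaving $\Omega$, takes the form
\begin{equation*}
u_{V,k}=e^{i\lambda_k\phi_k}\big(a_{0,k}+\tfrac1{\lambda_k}\,a^{V}_{1,k}\big)+R_{V,k},\qquad \|R_{V,k}\|_{H^2}\leq C\lambda_k^{-2},
\end{equation*}
where $a_{0,k}$ is a fixed $V$-independent amplitude, nonvanishing on $\gamma_k\cap W_k$, while the first transport correction satisfies, on $\gamma_k\cap W_k$,
\begin{equation*}
a^{V_1}_{1,k}-a^{V_2}_{1,k}=i\,c^{\mathrm{go}}_k\!\int_{\gamma_k}(V_1-V_2)\,ds,\qquad c^{\mathrm{go}}_k\neq0.
\end{equation*}
One isolates this $V$-dependence by comparing $u_{V,k}$ with the $V$-free solution $u_{0,k}$ via $\Box(u_{V,k}-u_{0,k})=-V u_{V,k}$ and applying the retarded parametrix to the oscillatory right-hand side; the hypothesis $V\in\mathcal C^4$ is exactly what lets one push the expansion far enough that $R_{V,k}$ is small in $H^2$.

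The central step is the decoupling. I would set $f=\sum_{k\geq1}c_kf_k$ with weights $c_k>0$ chosen so that $\sum_k c_k^2<\infty$, whence $f\in L^2((0,T)\times(\tilde\Omega\setminus\overline\Omega))$, and then choose the frequencies $\lambda_k$ \emph{inductively}, each one so large relative to $\lambda_1,\dots,\lambda_{k-1}$ and $c_1,\dots,c_k$ that the wave packets become asymptotically orthogonal. Pairing the difference $u_{V_1}-u_{V_2}$ against $e^{-i\lambda_m\phi_m}\,\overline{\chi_m}$, with $\chi_m$ a bump concentrating at a scale $\delta_m\to0$ about a point of $\gamma_m\cap W_m$, and invoking non-stationary phase (the gradients $\lambda_j\nabla\phi_j$ are pairwise well separated once the frequencies are lacunary) together with the $H^2$-smallness of the $R_{V,k}$, one gets for every $m$
\begin{equation*}
\lambda_m\!\int (u_{V_1}-u_{V_2})\,e^{-i\lambda_m\phi_m}\,\overline{\chi_m}\;=\;c_m\kappa_m\!\int_{\gamma_m}(V_1-V_2)\,ds\;+\;E_m,
\end{equation*}
with $\kappa_m\neq0$ an explicit constant and $|E_m|\leq c_m|\kappa_m|\,\varepsilon_m\,\Lambda(V_1,V_2)$, where $\Lambda(V_1,V_2)$ depends only on $\|V_1\|_{\mathcal C^4}$, $\|V_2\|_{\mathcal C^4}$ and the geometry, and the numbers $\varepsilon_m$ --- collecting the cross terms, the remainder contributions and the $O(\delta_m)$ smearing --- can be arranged to tend to $0$. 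Since $u_{V_1}=u_{V_2}$ on $(0,T)\times(\tilde\Omega\setminus\overline\Omega)$ annihilates the left-hand side, $\big|\int_{\gamma_m}(V_1-V_2)\,ds\big|\leq\varepsilon_m\Lambda(V_1,V_2)$; as $\{\gamma_k\}$ is dense among admissible rays and $V_1-V_2$ is continuous, letting $m\to\infty$ along a subsequence converging to an arbitrary admissible ray $\gamma$ yields $\int_\gamma(V_1-V_2)\,ds=0$ for all admissible $\gamma$. (Running the same computation with $u_{V_j}-u_0$, where $u_0$ is the $V$-free solution with source $f$, in place of $u_{V_1}-u_{V_2}$ shows that each $\int_\gamma V_j\,ds$ is itself determined by the measurement, which yields the reconstruction algorithm.)

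It remains to invert the light ray transform. The function $V_1-V_2$ has $(V_1-V_2)(t,\cdot)$ supported in $\Omega$ for every $t$, and its light ray transform vanishes along every admissible ray; by injectivity of the light ray transform on such spatially compactly supported functions --- which recovers the function precisely on $\mathcal D$, the loss near $t=0$ and $t=T$ being the unavoidable obstruction, and which at the Fourier level rests on the fact that $\widehat{V_1-V_2}$ vanishes on the spacelike cone $\{|\tau|\leq|\xi|\}$ while being entire in $\xi$ for each fixed $\tau$ --- one concludes $V_1=V_2$ on $\mathcal D$. The main difficulty, and the heart of the proof, is the decoupling step: it hinges on a geometric optics construction clean enough that the $V$-dependent part of each measured packet is, to leading order, exactly the light ray transform along its central ray, together with quantitative non-stationary phase bounds that, after the inductive choice of the frequency gaps, render all interactions between distinct packets and all construction errors subordinate to --- and vanishing relative to --- the leading signal. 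The auxiliary ingredients (the convex geometry of chords, WKB constructions with an oscillatory forcing, injectivity of the flat light ray transform) are by now standard.
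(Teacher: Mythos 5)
Your plan is correct in spirit and reaches the same conclusion, but the mechanism for extracting the light ray transform is genuinely different from the paper's. You use \emph{one} frequency $\lambda_k$ per ray, chosen inductively to be lacunary, and you read the line integral $\int_{\gamma_k}V$ off the \emph{first transport correction} $a^{V}_{1,k}$ of the geometric optics expansion, observed on the exit segment; this forces you to track the $V$-dependence of the amplitude explicitly and yields, for each fixed $m$, only an approximate identity with a fixed error $\varepsilon_m$, so that exact vanishing of the ray transform is recovered only after passing to the limit along a subsequence $\gamma_{m_i}\to\gamma$. The paper instead attaches to each ray $\gamma_j$ an entire ladder of frequencies $\tau_k=e^k$ with weights $c_k$ (so the universal source is a double sum over rays and frequencies), and extracts the ray transform not from any $V$-dependent amplitude term but from a duality pairing $I^j_N$ of the measured $u$ against a conjugate packet $\mathcal W_{j,\tau_N}$: integrating by parts brings out the zeroth-order term $Vu$, the concentration of $w_{j,N}^2$ in a $\delta_j/(2\log\tau_N)$-tube around $\gamma_j$ produces a weak-$*$ approximation of the line integral, and letting $N\to\infty$ within the ladder gives the \emph{exact} limit $\lim_N(c_N^{-1}I^j_N-S^j_N)=\tfrac{\sqrt2}{2}b_j\int_\R V(\gamma_j)$, hence exact vanishing of the transform for each $\gamma_j$ rather than a sequence of approximate bounds. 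The paper's route is cleaner in that the $V$-dependent amplitude corrections never need to be computed (only their size needs to be controlled), and it resolves the cross-ray and cross-frequency interference through two explicit mechanisms: integration by parts in $t$ against $e^{\pm {\rm i}(\tau_\ell-\tau_N)t}$ for $\ell\neq N$, and the counting function $h_j(N)\to\infty$ combined with the shrinking tubular supports for $k\neq j$. Your route is perhaps more intuitive but leaves the hardest estimates — the inductive frequency-gap conditions and the uniform control of the infinite tail $\sum_{k>m}c_k(\cdots)$ when the $\lambda_k$ for $k>m$ are not yet chosen when $\lambda_m$ is fixed — only sketched; spelling these out carefully would essentially reconstruct the kind of bookkeeping the paper performs with the $b_j,c_k,\kappa_j$ sequences. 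The final steps — density of rays in $\mathcal D$, continuity of $V_1-V_2$, and injectivity of the flat light ray transform on spatially compactly supported functions — match the paper.
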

Note that the result of Theorem \ref{t0} is stated with a single measurement on a neighborhood of the lateral boundary $(0,T)\times\p \Omega$ of the solution of \eqref{pf} subjected to our universal source $f$. As a direct consequence of Theorem \ref{t0}, we can show that, when $T$ is large enough, it is possible to recover uniquely the coefficient $V$ on some subset of $\mathcal D$ from a single measurement on a neighborhood of the lateral boundary $(0,T)\times\gamma$ with $\gamma$ an arbitrary open subset of $\p \Omega$. This result can be stated as follows.
\begin{corollary}\label{c1} Let the condition of Theorem \ref{t0} be fulfilled, fix  $f \in L^2(\R^{1+n})$ the universal source introduced in Theorem \ref{t0} and assume that $\tilde{\Omega}\setminus \overline{\Omega}$ is connected. Consider $\mathcal O$ an arbitrary open subset of $\tilde\Omega\setminus\Omega$, $T_1>$Diam$(\Omega)$ and $\mathcal D_{T_1}$ a subset of $(0,T)\times\Omega$ given by
$$\mathcal D_{T_1}=  \{(t,x) \in (0,T_1)\times \Omega\,|\, \dist{(x,\p \Omega)}<t<T_1-\dist{(x,\p\Omega)}\}.$$
Assume that the following condition is fulfilled
\begin{equation}\label{c1a} T>T_1+\sup_{x\in\overline{\tilde{\Omega}}\setminus \Omega}\textrm{\emph{dist}}(x,\mathcal O),\end{equation}
where \emph{dist} denotes   the distance function on $\tilde{\Omega}\setminus \overline{\Omega}$. Then, for any $V_j$ in the Sobolev space \eqref{V_space}, $j=1,2$, and for $u_j$ solving \eqref{pf} with $V=V_j$ and source term $f$, there holds, 
\begin{equation}\label{c1a}u_1=u_2,\quad \text{on $(0,T)\times\mathcal O$}\quad\Longrightarrow \quad V_1=V_2 \quad \text{on $\mathcal D_{T_1}$}.\end{equation}
\end{corollary}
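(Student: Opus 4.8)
The plan is to reduce Corollary~\ref{c1} to Theorem~\ref{t0} by a unique continuation argument carried out in the exterior region $\tilde\Omega\setminus\overline\Omega$. Set $w:=u_1-u_2$. Since each $V_j$ lies in $\mathcal C([0,T];\mathcal C^4_0(\Omega))$ it vanishes on $(0,T)\times(\tilde\Omega\setminus\overline\Omega)$, so subtracting the two instances of \eqref{pf} the source $f$ cancels and $w$ solves the free wave equation $\Box w=0$ on $(0,T)\times(\tilde\Omega\setminus\overline\Omega)$, with $w(0,\cdot)=\partial_t w(0,\cdot)=0$ and, by assumption, $w=0$ on $(0,T)\times\mathcal O$. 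Extending $w$ by zero to $\{t\leq 0\}$, which is legitimate because $f$ and the Cauchy data vanish there, we obtain a distributional solution of $\Box w=0$ on $(-\infty,T)\times(\tilde\Omega\setminus\overline\Omega)$ that vanishes on the open set $\big((-\infty,0)\times(\tilde\Omega\setminus\overline\Omega)\big)\cup\big((-\infty,T)\times\mathcal O\big)$.

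Next I would invoke the sharp unique continuation theorem for the wave operator (Tataru, Robbiano--Zuily, H\"ormander). Because $\tilde\Omega\setminus\overline\Omega$ is assumed connected, the vanishing of $w$ can be propagated across it at unit speed, measured by the intrinsic distance of $\tilde\Omega\setminus\overline\Omega$, and forward in time thanks to the vanishing Cauchy data at $t=0$; this yields $w=0$ on $\mathcal R:=\{(t,x)\in(0,T)\times(\tilde\Omega\setminus\overline\Omega):\ \mathrm{dist}(x,\mathcal O)<t\}$. Writing $d:=\sup_{x\in\overline{\tilde\Omega}\setminus\Omega}\mathrm{dist}(x,\mathcal O)$, the set $\mathcal R$ contains the full slab $(d,T)\times(\tilde\Omega\setminus\overline\Omega)$, and the hypothesis $T>T_1+d$ gives $T-d>T_1$. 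Thus $u_1=u_2$ on a time-slab of width strictly larger than $T_1$ over the entire exterior region.

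Finally I would return to the mechanism behind Theorem~\ref{t0}: that proof reconstructs $V$ on $\mathcal D$ by reading off, from $u|_{(0,T)\times(\tilde\Omega\setminus\overline\Omega)}$, the light ray transform of $V$ along a dense collection of light rays, and then uses its injectivity. To recover $V$ only on $\mathcal D_{T_1}$ one may discard the rays whose exterior portion leaves the slab $(d,T)\times(\tilde\Omega\setminus\overline\Omega)$: equivalently, after shifting the time variable by $d$, the identity $u_1=u_2$ on $(d,d+T_1)\times(\tilde\Omega\setminus\overline\Omega)$ is exactly the input needed to run the argument of Theorem~\ref{t0} with time horizon $T_1$, which produces $V_1=V_2$ on $\mathcal D_{T_1}$. (Alternatively, observe that $w=0$ on $(d,T)\times(\tilde\Omega\setminus\overline\Omega)$ lets the reconstruction of Theorem~\ref{t0} recover $V$ on the domain $\mathcal D_{T-d}$, defined as $\mathcal D_{T_1}$ but with $T_1$ replaced by $T-d$, and $\mathcal D_{T_1}\subseteq\mathcal D_{T-d}$ since $T_1<T-d$ and these domains increase with the time horizon.)

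I expect the crux to be the unique continuation step and its geometric bookkeeping: one must set Tataru's theorem over the possibly non-convex exterior $\tilde\Omega\setminus\overline\Omega$, chain the propagation across it using connectedness, and confirm that it is the one-sided forward cone — not the more expensive two-sided one — that is genuinely available, so that a single copy of $\sup_{x}\mathrm{dist}(x,\mathcal O)$ in \eqref{c1a} suffices. A secondary check is that, after this propagation, the surviving portion of the measurement still carries every light ray used by the reconstruction of Theorem~\ref{t0} over the sub-domain $\mathcal D_{T_1}$.
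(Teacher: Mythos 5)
Your overall architecture — restrict $w=u_1-u_2$ to the exterior where the potentials vanish, extend across $t=0$ using the homogeneous Cauchy data, run a unique continuation argument, then reduce to Theorem~\ref{t0} on a shorter time horizon — is the paper's. However, the geometry of your unique-continuation cone is wrong. You assert that the vanishing of $w$ propagates ``forward in time thanks to the vanishing Cauchy data at $t=0$'', yielding $\mathcal R=\{(t,x):\dist(x,\mathcal O)<t\}$. Tataru-type unique continuation propagates a null set symmetrically in time from the observation cylinder; the role of the trivial Cauchy data at $t=0$ (whether via the paper's even reflection to $(-T,T)$ or your zero extension to $(-\infty,T)$) is only to widen the effective observation interval \emph{backwards} in time, which removes the \emph{past} tip of the double cone. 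The surviving constraint is the future one, and the correct conclusion is
\[
w=0 \quad\text{on}\quad \{(t,x)\in(0,T)\times(\tilde\Omega\setminus\overline\Omega):\ \dist(x,\mathcal O)<T-t\},
\]
a cone that shrinks as $t\uparrow T$. Your $\mathcal R$, which instead grows from $t=0$, is not what unique continuation delivers: for $t$ close to $T$ it would assert vanishing on essentially the whole exterior annulus, whereas the lateral observation near $t=T$ gives nothing off $\mathcal O$, and the vanishing Cauchy data cannot simply be propagated forward past the unknown source $(V_2-V_1)u_2$ hidden inside $\Omega$.

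With the correct cone and $d:=\sup_{x\in\overline{\tilde\Omega}\setminus\Omega}\dist(x,\mathcal O)$, the hypothesis $T>T_1+d$ gives $\dist(x,\mathcal O)\leq d<T-T_1<T-t$ for all $t\in(0,T_1)$ and $x\in\tilde\Omega\setminus\overline\Omega$, hence $w=0$ on the \emph{initial} slab $(0,T_1)\times(\tilde\Omega\setminus\overline\Omega)$, and one then applies the argument of Theorem~\ref{t0} with $T$ replaced by $T_1$. No time-shift is required, which is fortunate: your proposed shift by $d$ is incompatible with the initial-value structure of \eqref{pf} (the source $f$ is supported starting at $t=0$, and the integration by parts defining $I_N^j$ uses $u(0,\cdot)=\partial_tu(0,\cdot)=0$), so the slab $(d,T)\times(\tilde\Omega\setminus\overline\Omega)$ could not have been fed into the Theorem~\ref{t0} machinery even if it had been established, and the region it would recover, $\{(t,x):\max(d,\dist(x,\partial\Omega))<t<T-\dist(x,\partial\Omega)\}$, does not contain $\mathcal D_{T_1}$. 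Replacing $\mathcal R$ by the correct cone, the rest of your reduction goes through.
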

                                                                
\subsection{Previous literature}

The recovery of coefficients appearing in hyperbolic equations from boundary measurements, or the so-called Dirichlet-to-Neumann map, is an inverse problem with a rich recent literature. It physically arises in the study of recovery of information about signal propagation, such as determining the evolving density of an in-homogeneous medium or determining the wave speed of sound propagating in different layers of earth. It is also related to the challenging inverse problem of determining non-linear terms in hyperbolic equations (see e.g. \cite{Kian3}). These non-linear questions are motivated in part by the study of vibrating systems or the detection of perturbations arising in electronics, such as the telegraph equation or the study of semi-conductors (see for instance \cite{CH}). 

Broadly speaking, the literature of inverse problems for hyperbolic equations can be divided into two categories, namely that of recovering time-independent or time-dependent coefficients, and the majority of the literature in both cases uses infinite measurements. Here, by infinite measurements we mean that an infinite number of sources $f$ in \eqref{pf} are required to deduce uniqueness of the coefficient $V$.

We begin with reviewing the literature of uniqueness results with infinite measurements. In the time-independent category, the first class of uniqueness results were obtained in the works \cite{Bel, RS1,I1}. We mention also the subsequent works \cite{AS,BCY,Ki,SU1,SU2} that also provide stability estimates from full or partial knowledge of the hyperbolic Dirichlet-to-Neumann map. In particular, the approach of \cite{Bel} is based on the discovery of the powerful boundary control method. At its core, this method is based on combining controllability theory and unique continuation for the wave equation together with boundary integral identities. This approach even extends to the recovery of a Riemannian manifold, up to isometry, from boundary measurements for the wave equation with a variable coefficient principal part. We refer to \cite{BelK,KKL} for applications of the boundary control method to the  recovery of a Riemannian manifold. This method also allows unique recovery of coefficients in the case where the sources and the receivers are located on disjoint sets, see for example \cite{KKLO,LO}. 

In the case of time-dependent coefficients, the boundary control method is less successful, even when the principal part of the wave equation has constant coefficients, as in \eqref{pf} for example. Indeed, the method relies on the unique continuation result of Tataru \cite{Ta}, that fails to hold in general, unless the time-dependence of all the coefficients is real-analytic (see the general  counter examples of \cite{AB}). In the case that the coefficients depend analytically on the time variable we mention the works \cite{E1,E2,E3} where the author extended the boundary control method to these class of coefficients. 

For more general time-dependent coefficients, the approach of \cite{RS1,St} based on the construction of geometric optics solutions, has been successful in deriving uniqueness and stability results. These results and many of the subsequent works are based on the principle of propagation of singularities for the wave equation and extend to the case of variable coefficient wave equations, where the problem of recovering coefficients reduces to injectivity of certain geometrical data on Lorentzian manifolds, see for example \cite{FIKO,FIO,FK,KO,LOSG}. We remark that all of these works require strong geometrical assumptions and that in general recovering time-dependent coefficients for variable coefficient wave equations remains a daunting prospect.

All of the aforementioned results are stated with infinitely many measurements (or sources). As discussed above and specifically in the case of the wave equation with constant coefficient principal part as in \eqref{pf}, the recovery of coefficients has been well-understood both in the time-dependent or time-independent categories. 

The story is vastly different when one considers a finite number of measurements, where there seems to be no result for recovering a time-dependent coefficient. In the time-independent category however, by applying the Bukhgeim-Klibanov approach  of \cite{BK} that is based on Carleman estimates, some authors have considered the recovery of time-independent coefficients from a single measurement, see for example \cite{Klibanov0}. Since then this approach has been improved to include stability results by several authors. We refer the reader to the works of \cite{Bell,BY,IY,Ya,Ste2} for further results in this direction. 

The Bukhgeim-Klibanov approach is based on linearizing the inverse problem and reformulating the problem into that of recovering a source term. In light of this, all the results obtained by this approach require a non-vanishing initial condition for the solution $u$. The presence of this non-vanishing initial condition corresponds to some a priori information on the inaccessible part (the part $x\in\Omega$) that makes these results more difficult to apply in reality. 

As an alternative to the Bukhgeim-Klibanov approach, we also mention the works  \cite{AS,CY,HLYZ,KLLY} where the authors considered an approach based on the construction of suitable input for proving recovery of time-independent coefficients appearing in diffusion equations. Note that the approach of \cite{AS,CY,KLLY} is based on the analyticity in time of the solution which does not hold for hyperbolic equations. 

Within the time-independent category, a few authors have also considered approaches based on a single measurement of the solution to the wave equation subjected to a point source, represented by a Dirac delta distribution, on the boundary or inside the domain. In contrast to the natural energy space \eqref{sobolev_space} for \eqref{pf} that we consider in this paper, these works are based on extending the solution space to \eqref{pf} in a distributional sense to allow very singular sources. In this setting one of the first results that we can mention is the one of \cite{SS} where partial information about the coefficient of a hyperbolic equation can be recovered from a single measurement associated with a boundary point source. In \cite{RY} the authors proved that under an additional smallness assumption on the unknown coefficient, it is possible to stably recover it from a single boundary measurement of the solution subjected to an internal point source. In the same spirit, the works of \cite{Ra1,Ra2,RSacks} were devoted to the unique recovery of a special class of zeroth order time-independent coefficients. 

Finally, we mention the work of \cite{HLO} where the recovery of a time-independent Riemannian metric is considered from a single measurement. There, the single measurement corresponds to a source term that is the sum of a countable number of Dirac delta distributions in time and space.

\subsection{A comparison with the previous literature}

Let us now discuss the novelties of our main result. Firstly, and to the best of our knowledge, Theorem \ref{t0} corresponds to the first result for unique recovery of a general time-dependent coefficient from a single measurement subject to the wave equation, or any other evolution PDE. 

In fact even within the class of time-independent coefficients, Theorem \ref{t0} appears to be the first single measurement uniqueness result that does not require initial time excitation of solutions, and that also provides a source function that is compatible with the natural energy class \eqref{sobolev_space} for solutions of the wave equation with vanishing initial conditions. In view of these features, even for time-independent coefficients, the statement of our uniqueness result can make it more suitable for applications.

As a second novelty, we mention that Theorem \ref{t0} proves uniqueness of time-dependent zeroth order coefficients in the optimal region $\mathcal D$. Even in case of infinite measurements, most of the uniqueness results for time-dependent coefficients either require information at $t=0,T$, see for example \cite{Kian,Kian1,Kian2,KO}, or require the knowledge of the coefficient outside of $\mathcal D$, \cite{Ben,FIKO,FK,RR}. In the latter group, uniqueness results are usually provided on a sub-optimal region that is approximately equal to
\begin{equation}\label{MU}\left\{(t,x)\in\left(\frac{d}{2},T-\frac{d}{2}\right)\times\Omega:\ \textrm{dist}(x,\p\Omega)<\min\left(t-\frac{d}{2},T-t-\frac{d}{2}\right)\right\},\end{equation}
with $d=\textrm{Diam}(\Omega)$ and for $T>d$.

Finally, we also mention that Corollary \ref{c1} provides a partial data version of our main result, where the measurements associated to the single source are restricted to a neighborhood of an arbitrary portion of the boundary $\partial\Omega$, provided that the time $T$ is large enough. In particular, for coefficients that are real analytic with respect to the time variable, the result of Corollary \ref{c1} corresponds to the full recovery of the coefficient on the full spacetime domain $(0,T)\times \Omega$. Thus, Corollary \ref{c1} can also be viewed as a single boundary measurement formulation, in terms of localization of the measurement, of the work of \cite{E1,E2} that is devoted to the recovery of time analytic coefficients from infinitely many measurement on an arbitrary portion of the boundary.

Our proof explicitly constructs the universal source function $f$ and also provides an algorithm for reconstructing $V$. We remark that the domain $\tilde \Omega$ in the statement of Theorem~\ref{t0} could be as small as one wishes, or in other words the source $f$ can be supported in a very small neighborhood of $(0,T)\times\p \Omega$. We believe that the approach here could be pushed in principle to allow unique recovery of time-dependent coefficient by a single {\em boundary} measurement as well, instead of measurements that are associated to a source located near the boundary. We leave this, as well as the extension of our result to the setting of Lorentzian manifolds, as directions for future research. 

\subsection{Outline of the paper}

Let us briefly sketch the methodology employed in proving Theorem~\ref{t0}. We recall first that the term {\em light ray} refers to a curve in spacetime that is a geodesic with respect to the Minkowski metric and whose tangent vector at each point along the curve is null. We will start with a countable collection of light rays that densely pack the domain $\mathcal D$. Precisely, this means that given any small positive $\epsilon$ and any light ray $\gamma$ in $\mathcal D$, there will be a light ray in the collection that stays within a distance $\epsilon$ of $\gamma$. Next, we will consider a universal source function that is constructed based on combining infinitely many source functions that each generates a geometric optic solution to \eqref{pf} concentrating along a light ray in the collection. We show that the solution to \eqref{pf} corresponding to this universal source determines the integrals of the unknown function $V$ along all the light rays in the collection (see Theorem~\ref{t1}). The main theorem then follows by using the density of the rays in the collection and injectivity of the light ray transform, see for example \cite{Be,Ste}.

The paper is organized as follows. In Section~\ref{prelim_sec}, we begin with introducing a few notations used in the paper and then define the admissible collection of light rays that tightly pack the spacetime domain. Section~\ref{geo_optics_sec} is concerned with a review of the classical geometric optics solutions to \eqref{pf} also known as wave packets. The construction of wave packets in this paper is modified to allow thinner supports for these solutions as the frequency increases. Next, we show that it is possible to construct explicit sources that are supported in the set $(0,T)\times (\tilde{\Omega}\setminus \overline{\Omega})$ and such that the solution to \eqref{pf} subject to these source functions generate the desired wave packets. In section~\ref{universal_source_sec} we construct the universal source function $f$ that combines the geometric optic solutions via a double infinite summation corresponding to the set of light rays and the set of frequencies of the geometric optic solutions associated to each light ray. Section~\ref{asymptotic_analysis_sec} is concerned with the proof of Theorem~\ref{t1}, showing that the knowledge of $u_{|(0,T)\times (\tilde{\Omega}\setminus \overline{\Omega})}$, with $u$ solving \eqref{pf}, uniquely determines the integrals of $V$ along all the light rays in the collection. The proof of the main theorem follows immediately from combining Theorem~\ref{t1} and injectivity of the light ray transform. This is sketched in Section~\ref{proof_section}, where we also prove Corollary \ref{c1}.

\section{Preliminaries}
\label{prelim_sec}

\subsection{Notation}
Let us introduce a few notations that will be used in the paper. As already discussed, we use $$(t,x)=(t,x^1,x^2,\ldots,x^n)$$ for the spacetime coordinate system with $t \in \R$ and $x \in \R^n$. Given two vectors $v,w \in \R^{n}$, their inner product and norm is defined respectively by the expressions
$$ v\cdot w = \sum_{j=1}^n v_j\,w_j\quad\text{and}\quad |v|=\sqrt{v\cdot v}.$$
Throughout the paper we use the notation $\chi$ to stand for a smooth non-negative cutoff function satisfying 
\begin{equation}
\label{cutoff}
\|\chi\|_{L^2(\R)}=1\quad\text{and}\quad
\chi(t)= \begin{cases}
        1  & \text{if}\,\, |t|\leq\frac{1}{8\sqrt{n}},\\
             0  & \text{if}\,\,|t|\geq \frac{1}{4\sqrt{n}}.
     \end{cases}
\end{equation}
We denote also by $\mathbb N$ the set $\{1,2,\ldots\}$.
As already discussed in the introduction, the construction of the universal source function $f$ in this paper involves the summation of a countable number of smooth sources each of which generates a wave packet near a light ray. For this reason it is important to use a consistent notation for convergence of infinite series. Since we require $f \in L^2(\R^{1+n})$ with $\supp f \subset (0,T)\times (\tilde{\Omega}\setminus\overline{\Omega})$, we will be working with convergence of source terms in the $L^2((0,T)\times (\tilde{\Omega}\setminus\overline{\Omega}))$ topology. We formally write 
$$ f= \lim_{j\to \infty}f_j$$ 
to stand for convergence with respect to the $L^2((0,T)\times (\tilde{\Omega}\setminus\overline{\Omega}))$ topology of a sequence of functions $\{f_j\}_{j=1}^{\infty}\subset L^2((0,T)\times (\tilde{\Omega}\setminus\overline{\Omega}))$. For solutions to the wave equation \eqref{pf}, we will work with the natural Sobolev space \eqref{sobolev_space} and as such we formally write
$$ u =\lim_{j\to \infty}u_j$$
to stand for convergence with respect to the \eqref{sobolev_space} topology. We close this section by recording the following trivial lemma about convergence of solutions to the wave equation. We have included the proof for the sake of completeness.   
 \begin{lemma}
 \label{conv_lem}
 Let $\{f_j\}_{j=1}^{\infty} \subset L^2((0,T)\times (\tilde{\Omega}\setminus\overline{\Omega}))$ and assume that this sequence of sources converges to a source $f$ in this topology. Let $u_j$ denote the unique solution to \eqref{pf} with source $f_j$. Then, the sequence $\{u_j\}_{j=1}^{\infty}$ converges to a function $u$ with respect to the \eqref{sobolev_space} topology. Moreover, $u$ is the unique solution to \eqref{pf} subject to the source $f$.  
 \end{lemma}
 
 \begin{proof}
 Note that for each $j,k \in \N$, the function $u_j-u_k$ solves the wave equation with source function $f_j-f_k$. Therefore, the energy estimate \eqref{energy} applies to obtain
 $$\|u_j-u_k\|_{\mathcal C([0,T];H^1(\R^n))\cap \mathcal C^1([0,T];L^2(\R^n))} \leq C \|f_j-f_k\|_{L^2((0,T)\times (\tilde{\Omega}\setminus\overline{\Omega}))}.$$
 Therefore we deduce the the sequence $\{u_j\}_{j=1}^{\infty}$ is a Cauchy sequence with respect to the \eqref{sobolev_space} topology. We now define
 $$ u = \lim_{j\to \infty} u_j$$
 and proceed to prove that $u$ satisfies \eqref{pf} with source term $f$. The initial conditions are clearly satisfied. To prove $(\Box+V) u =f$, it suffices to show that
 $$ \int_{(0,T)\times\R^{n}}f\,v\,dx=\int_{(0,T)\times\R^{n}} a(u,v)\,dx\quad \quad \forall\, v \in \mathcal C^{\infty}_0([0,T]\times\R^{n})$$
 where $$a(u,v)=-\frac{\p u}{\p t}\,\frac{\p v}{\p t}+\sum_{k=1}^n\frac{\p u}{\p x^k}\,\frac{\p v}{\p x^k }+V\,u\,v.$$
 We assume without loss of generality that $\|v\|_{H^1((0,T)\times\R^{n})}=1$ and note that given any $j \in \N$:
 \begin{multline*}
\left|\int_{(0,T)\times\R^{n}}(f\,v-a(u,v))\,dx\right|= \left|\int_{(0,T)\times\R^{n}}((f-f_j)\,v+a(u_j-u,v))\,dx\right|\\
< C\,(\|f-f_j\|_{L^2((0,T)\times (\tilde{\Omega}\setminus\overline{\Omega}))} + \|u-u_j\|_{ \mathcal C(0,T;H^1(\R^n))}\\
+\|u-u_j\|_{\mathcal C^1(0,T;L^2(\R^n))}).
 \end{multline*}
 The proof is completed since $f_j\to f$ and $u_j\to u$ in their respective topologies.
 \end{proof}
 
\subsection{Constructing a countable dense set of light rays}
\label{ray_sec}
The aim of this section is to construct a countable family of light rays that tightly pack the set $(0,T)\times\Omega$ and also introduce some notation that will be used later in the paper. In what follows, a future pointing light ray is a curve $\gamma:\R\to\R^{1+n}$ given by the parametrization
$$ \gamma(s)= \gamma(0) + s\,(1,\xi)\quad s\in \R$$
for some unit vector $\xi \in \R^n$.
\\

Let $\mathcal T=\{t_j:\ j\in\mathbb N\}$ denote an ordering of the rational numbers in the interval $(0,T)$ and let $\mathcal P=\{p_j\in\partial\Omega: \ j\in\mathbb N\}$
denote a dense set of points on $\p\Omega$. We consider the countable set of all future pointing light rays $\gamma:\R \to \R^{1+n}$, parametrized as above, that satisfy the following three properties:
\begin{itemize}
\item[(i)]{The intersection of $\gamma$ with $(0,T)\times\Omega$ lies in the set $\mathcal D$.}
\item[(ii)]{The earliest intersection of $\gamma$ with $\R\times \p \Omega$ is the point $\gamma(0)\in \mathcal T\times \mathcal P$.}
\item[(iii)]{The projection of $\gamma$ onto the spatial coordinates is a straight line that contains two distinct points in $\mathcal P$.}
\end{itemize}
We consider an ordering of this countable set of light rays and denote it by $$\mathbb V=\{\gamma_j\}_{j=1}^{\infty}.$$
Given any $j \in \N$, we choose a point $$q_{j}=(s_j,x_j)=\gamma_j(\hat{s}_j) \in (0,T)\times (\tilde{\Omega}\setminus\overline{\Omega})$$ for some $\hat{s}_j<0$. Here, $s_j$ and $x_j$ are the time and spatial coordinates of $q_j$ respectively.  We also choose a constant $\delta_j>0$ such that   
$$B_{\delta_j}(q_{j})\subset (0,T)\times (\tilde{\Omega}\setminus\overline{\Omega})$$
and additionally that the intersection of $(0,T)\times \Omega$ with the tubular neighborhood of the ray $\gamma_j$ of radius $\delta_j$ lies in the set $\mathcal D$. Here $B_{\delta_j}(q_{j})$ denotes the ball of radius $\delta_j$ centered at the point $q_{j}$.  Since $\mathbb V$ is countable, we can always choose the sequence $\{\delta_j\}_{j=1}^{\infty}$ to be strictly decreasing, that is to say
$$ \delta_1>\delta_2>\delta_3>\ldots.$$
Next and for the purpose of later application, we define two smooth functions $\zeta_{j,{\pm}}:\R^{1+n}\to \R$ that satisfy
\[
\zeta_{j,-}(t)= \begin{cases}
        0  & \text{if}\,\, t\leq s_j-\frac{\delta_j}{4\sqrt{n}},\\
             1  & \text{if}\,\,t\geq s_j.
     \end{cases}
\]
and
\[
\zeta_{j,+}(t) = \begin{cases}
        0  & \text{if}\,\, t\geq s_j+\frac{\delta_j}{4\sqrt{n}},\\
				\ &\ \\
             1  & \text{if}\,\,t \leq s_j+\frac{\delta_j}{8\sqrt{n}}.
    \end{cases}
\]

Observe that since $\hat{s}_j<0$, it follows that $\zeta_{j,-}=1$ on the segment of the light ray $\gamma_{j}$ that lies inside the set $(0,T)\times \Omega$.

\section{Geometric optics}
\label{geo_optics_sec}

In this section, we fix $j\in \N$ and recall the geometric optics construction, with some modifications, for the wave equation 
$$ \Box u+V u=0$$
that gives solutions concentrating on the light ray $\gamma_{j} \in \mathbb V$. By definition of the set $\mathbb V$ there exists unique indices $k_j,l_j,m_j \in \N$ with $l_j\neq m_j$ such that the light ray $\gamma_j$ is given by the parametrization 
$$ \gamma_{j}(s)=(t_{k_j}+s,p_{l_j}+s\,\xi_j)\quad \text{for all} \quad s\in \R$$
where
$$ \xi_j = \frac{p_{m_j}-p_{l_j}}{|p_{m_j}-p_{l_j}|}.$$ 
Note that, by strict convexity of $\p \Omega$, the light ray $\gamma$ intersects the boundary $(0,T)\times \p\Omega$ precisely two times at the points $\gamma(0)$ and $\gamma(|p_{m_j}-p_{l_j}|)$. Moreover, by property (i) in the definition of $\mathbb V$, the light ray does not intersect the set $\{0,T\}\times\Omega$. \\

The geometric optics construction here is based on the ansatz
\begin{equation}
\label{ansatz}
\mathcal U_{j,\tau}(t,x)= e^{{\rm i}\tau (-t+\xi_j\cdot x)}v_{j,\tau}(x)=e^{{\rm i}\tau (-t+\xi_j\cdot x)}\left(\sum_{k=0}^2 \frac{v_{j,\tau}^{(k)}(x)}{\tau^k}\right)
\end{equation}
where $\tau>e$  is a parameter.
We write
\begin{equation}\label{conjugatego}
\begin{aligned}
&(\Box+V) (e^{{\rm i}\tau(-t+\xi_j\cdot x)}v_{j,\tau})= \\
&e^{{\rm i}\tau (-t+\xi_j\cdot x)} \left( -2{\rm i}\tau(\partial_tv_{j,\tau}+ \xi_j\cdot\nabla_x v_{j,\tau}) + (\Box+V) \,v_{j,\tau}\right).
\end{aligned}
\end{equation}
The amplitudes $v_{j,\tau}^{(0)}$, $v_{j,\tau}^{(1)}$ and $v_{j,\tau}^{(2)}$ are determined iteratively, based on the requirement that the expression \eqref{conjugatego} vanishes in powers of $\tau$ up to second order. In particular, this imposes the transport equation 
    \begin{align}\label{transp_a0}
\partial_t v_{j,\tau}^{(0)}+ \xi_j\cdot\nabla_xv_{j,\tau}^{(0)}=0
    \end{align}
on $v_{j,\tau}^{(0)}$. To solve this equation, we first choose the vectors $e_{j,1},\ldots,e_{j,n-1}\in\R^n$ such that 
$$\{\xi_j,e_{j,1},\ldots,e_{j,n-1}\}$$ form an orthonormal basis for $\R^n$. Next, we set
\begin{multline}\label{a_0}v_{j,\tau}^{(0)}(t,x)= \left(\frac{\log\tau}{\delta_j}\right)^{\frac{n}{2}}\chi[((\log\tau)\,\delta_j^{-1}(s_j-t+(x-x_j)\cdot\xi_j )]\times \\
\prod_{k=1}^{n-1}\chi[(\log\tau)\,\delta_j^{-1}(x-x_j)\cdot e_{j,k} ],
\end{multline}
where $\delta_j$, $s_j$, $x_j$ are as defined in Section~\ref{ray_sec} and the function $\chi$ is given by \eqref{cutoff}. Then (\ref{transp_a0}) holds and the amplitude $v_{i,\tau}^{(0)}(t,x)$ is supported in a tubular neighborhood of radius $\frac{\delta_j}{2\,\log\tau}$ around $\gamma_j$. We emphasize here that our construction of the leading amplitude is different from that of the classical geometric optic constructions, as the support of the geometric optic solution around $\gamma_j$ also depends on the frequency parameter $\tau$. Indeed, as $\tau$ grows, the support of the geometric optics also gets more localized around the light ray $\gamma_j$. This will be important in our analysis. Moving on, the subsequent terms $v^{(k)}_{j,\tau}$ with $k=1,2$ are constructed iteratively by solving the transport equations
\begin{equation}\label{transmin}
-2{\rm i} ( \partial_tv_{j,\tau}^{(k)}+\xi_j\cdot\nabla_xv_{j,\tau}^{(k)}) + (\Box+V) v_{j,\tau}^{(k-1)} =0.
\end{equation}
These transport equations can be solved uniquely, by imposing zero initial conditions on the hyperplane
$$
\Sigma_j = \{(t,x) \in \R\times\R^{n} \,|\, 
t-s_j+(x-x_j)\cdot\xi_j= 0 \}.
$$
This yields
    \begin{align}\label{def_ak}
v_{j,\tau}^{(k)}(s+\tilde{\tau},s\xi_j+y)=\frac{1}{2{\rm i}}\int_0^s ((\Box+V)v_{j,\tau}^{(k-1)})(\tilde{s}+\tilde{\tau},\tilde{s}\xi_j+y)\,d\tilde{s},
    \end{align}
where $s \in \R$ and $(\tilde{\tau},y) \in(\R\times\R^n)\cap \Sigma_{j}$. It follows from (\ref{a_0}), via an induction, that also the subsequent amplitude terms are supported in a $\frac{\delta_j}{2\log\tau}$ tubular neighborhood of $\gamma_{j}$. 
\begin{remark}
We emphasize that while the principal amplitude $v_{j,\tau}^{(0)}$ does not depend on $V$, the subsequent terms $v_{j,\tau}^{(1)}$ and $v_{j,\tau}^{(2)}$ involve $V$ and its derivatives. In particular $v_{j,\tau}^{(1)}$ depends on $V$ while $v_{j,\tau}^{(2)}$ depends on $V$ and its first and second order derivatives. 
\end{remark}
\noindent We have the following bounds that follow directly from the expressions \eqref{a_0}--\eqref{def_ak}:
\begin{equation}
\label{amp_bounds}
\begin{aligned}
\|v^{(k)}_{j,\tau}\|_{\mathcal C^\ell((0,T)\times \tilde\Omega)} &\leq \kappa_{0,j}\, \,(\log\tau)^{\frac{n}{2}+2k+\ell} \quad \text{for $k=0,1,2$ and $\ell=0,1,2$}\\
\|v^{(k)}_{j,\tau}\|_{H^\ell((0,T)\times \tilde\Omega)} &\leq \kappa_{0,j} \,(\log\tau)^{2k+\ell} \quad \text{for $k=0,1,2$ and $\ell=0,1,2$}
\end{aligned}
\end{equation}
where $\kappa_{0,j}$ is a positive constant that is independent of the parameter $\tau$. Next, we use the definition of the combined amplitude term $v_{j,\tau}$ and the bounds above to deduce that
\begin{equation}
\label{amp_bound}
\begin{aligned}
\|v_{j,\tau}\|_{\mathcal C^{k}((0,T)\times\tilde\Omega)}&\leq \kappa_{1,j}\,(\log\tau)^{\frac{n}{2}+k} \quad \text{for $k=0,1,2$},\\
\|v_{j,\tau}\|_{H^{k}((0,T)\times\tilde\Omega)}&\leq \kappa_{1,j}\,(\log\tau)^{k} \quad \text{for $k=0,1,2$}
\end{aligned}
\end{equation}
where $\kappa_{1,j}$ is a positive constant that is independent of the parameter $\tau$. Similarly using equation \eqref{ansatz} together with the latter bound we deduce that
   \begin{equation}
\label{principal_bound}
 (\log\tau)^{\frac{n}{2}}\|\mathcal U_{j,\tau}\|_{H^{k}((0,T)\times\tilde{\Omega})}+ \|\mathcal U_{j,\tau}\|_{\mathcal C^{k}((0,T)\times\tilde{\Omega})}\leq \kappa_{2,j}\,\tau^{k}(\log\tau)^{\frac{n}{2}}\quad \text{for $k=0,1,2,$}
 \end{equation}
where $\kappa_{2,j}$ is a positive constant that is independent of the parameter $\tau$. Moreover, equations (\ref{transp_a0}) and (\ref{transmin}), together with (\ref{conjugatego}) imply that
$$ (\Box+V)\,\mathcal U_{j,\tau}= \tau^{-2}\,e^{{\rm i} \tau(-t+\xi_j\cdot x)}(\Box+V)v_{j,\tau}^{(2)}$$
and therefore
    \begin{align}\label{go_remainder}
\|(\Box+V)\,\mathcal U_{j,\tau}\|_{H^{1}((0,T) \times \tilde\Omega)} \leq \kappa_{3,j}\, \tau^{-1}\,(\log\tau)^{6},
    \end{align}
where $\kappa_{3,j}$ is a positive constant that is independent of the parameter $\tau$. 

Let us now consider the source term $f_{j,\tau}$ defined through the expression
\begin{equation}
\label{packets}
f_{j,\tau}(t,x) = \zeta_{j,+}(t)\,\Box(\zeta_{j,-}(t)\,\mathcal U_{j,\tau}(t,x)),\quad (t,x)\in\R^{1+n}
\end{equation}
where $\zeta_{j,\pm}$ are as defined in Section~\ref{prelim_sec}. From the definition of $\zeta_{j,\pm}$, we deduce that
\begin{equation}\label{sup1}f_{j,\tau}(t,x)=0,\quad t\notin\left[s_j-\frac{\delta_j}{4\sqrt{n}},s_j+\frac{\delta_j}{4\sqrt{n}}\right],\ x\in\R^n.\end{equation}
Then,  from the condition imposed to the cut-off function $\chi$, we get
$$f_{j,\tau}(t,x)=0,\quad t\in\left(s_j-\frac{\delta_j}{4\sqrt{n}},s_j+\frac{\delta_j}{4\sqrt{n}}\right),\ |s_j-t+(x-x_j)\cdot\xi_j|\geq \frac{\delta_j}{4\sqrt{n}}$$
which implies that
\begin{equation}\label{sup2}f_{j,\tau}(t,x)=0,\quad t\in\left(s_j-\frac{\delta_j}{4\sqrt{n}},s_j+\frac{\delta_j}{4\sqrt{n}}\right),\ |(x-x_j)\cdot\xi_j|\geq \frac{\delta_j}{2\sqrt{n}}.\end{equation}
In the same way, for all $k=1,\ldots,n-1$, using the fact that
$$\chi[(\log\tau)\,\delta_j^{-1}(x-x_j)\cdot e_{j,k} ]=0,\quad |(x-x_j)\cdot e_{j,k}|\geq \frac{\delta_j}{2\sqrt{n}},$$
we obtain 
\begin{equation}\label{sup3}f_{j,\tau}(t,x)=0,\quad t\in\R,\ |(x-x_j)\cdot e_{j,k}|\geq \frac{\delta_j}{2\sqrt{n}}.\end{equation}
Combining \eqref{sup1}--\eqref{sup3} with the fact that $\{\xi_j,e_{j,1},\ldots,e_{j,n-1}\}$ form an orthonormal basis for $\R^n$, we deduce that
\begin{equation}\label{sup4} f_{j,\tau}(t,x)=0,\quad t\in\R,\ |x-x_j|\geq \frac{\delta_j}{2}.\end{equation}
Here we use the fact that the condition $|x-x_j|\geq \frac{\delta_j}{2}$ implies that either
$$|(x-x_j)\cdot\xi_j|\geq \frac{\delta_j}{2\sqrt{n}}$$ or there exists $k\in\{1,\cdots,n-1\}$ such that
$|(x-x_j)\cdot e_{j,k}|\geq \frac{\delta_j}{2\sqrt{n}}.$
The identity \eqref{sup1} and \eqref{sup4} imply that $$\supp (f_{j,\tau}) \subset B_{\delta_j}(q_j)\subset (0,T)\times (\tilde{\Omega}\setminus\overline{\Omega}).$$
\begin{remark}
We emphasize that the source function $f_{j,\tau}$ is explicitly known, independent of the potential $V$, since it is supported in $B_{\delta_j}(q_j)$ and the function $\mathcal U_{j,\tau}$ is explicitly known here since its construction is local around $q_j$ and $V$ vanishes there. 
\end{remark}
\noindent We also record that 
\begin{equation}
\label{f_bound0}
\|f_{j,\tau}\|_{H^k((0,T)\times\tilde{\Omega})}\leq \kappa_{4,j}\,\tau^{1+k} \quad \text{for}\quad k=0,1.
\end{equation}
where $\kappa_{4,j}$ is a positive constant that is independent of the parameter $\tau$.

Next we define $u_{j,\tau}$ as the unique solution to equation \eqref{pf} subject to the source function $f_{j,\tau}$. Recalling the fact that $V$ vanishes in a $\delta_j$ neighborhood of $q_j$, we write
$$\begin{aligned}(\Box + V)(u_{j,\tau}-\zeta_{j,-}(t)\,\mathcal U_{j,\tau}) &= f_{j,\tau}- (\Box+V)(\zeta_{j,-}(t)\, \mathcal U_{j,\tau})\\
\ &=(\zeta_{j,+}(t)-1)\,(\Box+V)\,\mathcal U_{j,\tau},\end{aligned}$$
where we used the fact that $\zeta_{j,-}=1$ on a neighborhood of the support of $1-\zeta_{j,+}$.
Writing 
\begin{equation}
\label{correction_term}
 u_{j,\tau}(t,x)= \zeta_{j,-}(t)\,\mathcal U_{j,\tau}(t,x)+R_{j,\tau}(t,x),\quad (t,x)\in (0,T)\times \R^{n}
 \end{equation}
and applying the bound \eqref{go_remainder} together with classical energy estimates for the wave equation, we deduce the following bounds for the correction term $R_{j,\tau}$:
\begin{equation}
\label{correction_bound}
\|R_{j,\tau}\|_{H^2((0,T)\times \tilde\Omega)} \leq \kappa_{5,j}\,\tau^{-1}(\log\tau)^6
\end{equation}
where $\kappa_{5,j}$ is a positive constant that is independent of the parameter $\tau$. Let us also recall from Section~\ref{ray_sec} that $\zeta_{j,-}=1$ on the segment of the light ray $\gamma_j$ that lies inside $(0,T)\times \Omega$. Thus, the source term $f_{j,\tau}$ generates a solution that is approximately equal to the geometric optic ansatz $\mathcal U_{j,\tau}$ on $(0,T)\times \Omega$. 

Finally and for the sake of brevity, we define for each $j \in \N$, the positive constant $\kappa_j$ through the expression
\begin{equation}
\label{kappa}
\kappa_j =\delta_j^{-2}\cdot\max{\{\kappa_{0,j},\ldots,\kappa_{5,j}\}}.
\end{equation}
We mention in passing that $\kappa_j$ can for example be chosen to be $C\,\delta_j^{-\frac{n}{2}-8}$ where $C$ is a sufficiently large constant depending only on $T$, $\tilde\Omega$ and an a priori bound on $\|V\|_{C^4((0,T)\times\Omega)}$.
\section{Construction of the universal source function}
\label{universal_source_sec}

Let $\tau_k=e^k$ for $k\in\mathbb N$ and define the sequence $\{c_k\}_{k=1}^{\infty}$ through
$$ c_k =k^{-3}\tau_k^{-1},\quad k\in\mathbb N$$
We proceed to define for each $j \in \N$, a source term $f_{j}\in L^2((0,T)\times (\tilde{\Omega}\setminus\overline{\Omega}))$ through the expression 
$$ f_{j} = \lim_{N \to \infty}\sum_{k=1}^{N} c_k\, f_{j,\tau_k} \quad \text{in $L^2((0,T)\times \tilde \Omega)$ topology.}$$
Here, the sources $f_{j,\tau_k}$ are given by expression \eqref{packets}. Observe that this definition is justified since by \eqref{f_bound0} we have
$$ \sum_{k=1}^{\infty}c_k\,\|f_{j,\tau_k}\|_{L^2((0,T)\times\tilde \Omega))} \leq \kappa_{j} \sum_{k=1}^{\infty} c_k\, \tau_k\leq C\kappa_{j},$$
with $C>0$ independent of $j$.
Since all the sources $f_{j,\tau_k}$ are supported in balls of radius $\delta_j$ centered at points $q_{j}$ we also have
$$\supp f_{j} \subset (0,T)\times (\tilde{\Omega}\setminus\overline{\Omega}).$$
Henceforth, we will use the formal notation 
$$f_{j}= \sum_{k=1}^{\infty} c_k \,f_{j,\tau_k}$$
noting that the convergence is implicitly implied in the $L^2((0,T)\times \tilde\Omega)$ topology. Next, we define a sequence of positive real numbers $\{b_j\}_{j=1}^{\infty}$ such that 
\begin{equation}
\label{b_sequence}
\sum_{j=1}^{\infty} b_j \,\kappa_j <\infty.
\end{equation}
We now define our universal source function through the expression 
\begin{equation}
\label{universal_source}
f = \lim_{N \to \infty}\sum_{j=1}^{N} b_j\, f_{j} \quad \text{in $L^2((0,T)\times \tilde \Omega)$ topology.}
\end{equation}
Observe that $f \in L^2((0,T)\times\tilde{\Omega})$ and $\supp f\subset (0,T)\times (\tilde{\Omega}\setminus\overline{\Omega})$.

With the construction of the universal source function $f$ completed as above, we proceed to study \eqref{pf} subject to this source term. Let $u_{j,\tau_k}$ denote the solution to \eqref{pf} subject to the source $f_{j,\tau_k}$. Applying the energy estimate \eqref{energy}, it follows that
$$\sum_{k=1}^{\infty} c_k \,\|u_{j,\tau_k}\|_{\mathcal C^1([0,T];L^2(\tilde \Omega))\cap\mathcal C([0,T];H^1(\tilde \Omega))} \leq\,C\,\sum_{k=1}^{\infty}\|f_{j,\tau_k}\|_{L^2((0,T)\times\tilde \Omega)}\leq C\kappa_j.$$
Therefore
\begin{equation}\label{conv_u} 
\sum_{j=1}^{\infty}b_j\,\sum_{k=1}^{\infty} c_k \,\|u_{j,\tau_k}\|_{\mathcal C^1([0,T];L^2(\tilde \Omega))\cap\mathcal C([0,T];H^1(\tilde \Omega))} \leq C\sum_{j=1}^{\infty}b_j\,\kappa_j<\infty
\end{equation}
where we used \eqref{b_sequence} in the last step. Thus, we can define the function
\begin{equation}
\label{u_exp}
 u = \sum_{j=1}^{\infty}b_j\,( \underbrace{\sum_{k=1}^{\infty} c_k\,u_{j,\tau_k}}_{u_j})
\end{equation}
where the convergence of the infinite series holds with respect to the $$\mathcal C^1([0,T];L^2(\tilde \Omega))\cap\mathcal C([0,T];H^1(\tilde \Omega))$$ topology. Applying Lemma~\ref{conv_lem}, we conclude that the function $u$ above is the unique solution to \eqref{pf} subject to the universal source function $f$ given by \eqref{universal_source}.

\section{A representation formula}
\label{asymptotic_analysis_sec}

Let us consider a fixed $j \in \N$ corresponding to a fixed $\gamma_j \in \mathbb V$ and define 
\begin{equation}
\label{I_N}
I_{N}^{j}= \int_0^T\int_{\tilde{\Omega}\setminus\overline{\Omega}} \left(f\, \eta_j(x)\,\mathcal W_{j,\tau_N}-(\Box(\eta_j(x) \mathcal W_{j,\tau_N})-\eta_j(x) \,\Box \mathcal W_{j,\tau_N})\,u\right) \,dxdt,
\end{equation}
where $u$ solves \eqref{pf}.
Here,
$$ \mathcal W_{j,\tau_N}(t,x)=  e^{-{\rm i} \tau_N(-t+\xi_j\cdot x)}\, w_{j,N}(t,x)$$
with
$$ w_{j,N} (t,x)=\left(\frac{N}{\delta_j}\right)^{\frac{n}{2}}\,\chi[N\,{\delta_j}^{-1}(s_j-t+\xi_j\cdot (x-x_j))] \prod_{k=1}^{n-1}\chi(N\,\delta_j^{-1}e_{j,k}\cdot (x-x_j))$$
and $\eta_j\in C^{\infty}_c(\tilde{\Omega})$ is chosen such that $\eta_j \equiv 1$ on $\Omega$ and $\eta_j(x)=0$ for all $x\in \tilde{\Omega}\setminus\overline{\Omega}$ such that $\dist{(x',\p\Omega)}>\frac{\delta_j}{4}$. We also require that
$$ \|\eta_j \|_{C^2(\tilde\Omega)} \leq C\,\delta_j^{-2}$$
for some constant $C>0$ independent of $j$.

Let us emphasize that the dependency of $I_N^{j}$ with respect to the coefficient $V$ is given by  $u_{|(0,T)\times\tilde{\Omega}\setminus\overline{\Omega}}$ with $u$ the solution of \eqref{pf}. Therefore, if $u_{|(0,T)\times\tilde{\Omega}\setminus\overline{\Omega}}$ is known, $I_N^{j}$ will be also known even if the coefficient $V$ is unknown.   The definition of $I_N^{j}$ is motivated by the following computation:
\begin{align*}
\int_0^T\int_{\tilde{\Omega}\setminus\overline{\Omega}}f\, \eta_j(x)\,\mathcal W_{j,\tau_N}\,dx=& \int_0^T\int_{\tilde{\Omega}} \left((\Box+V)u\right)\eta_j(x)\,\mathcal W_{j,\tau_N}\,dx\,dt\\
=&\int_{(0,T)\times \tilde{\Omega}}u\,\Box(\eta_j(x)\,\mathcal W_{j,\tau_N})\,dx\,dt\\
&+\int_{(0,T)\times \tilde{\Omega}} V\,u\,\eta_j(x)\,\mathcal W_{j,\tau_N}\,dx\,dt, 
\end{align*}
where $u$ solves \eqref{pf} and we have used integration by parts in the second step. There are no boundary terms on $(0,T)\times\p\tilde\Omega$ since $\eta_j$ vanishes there. Moreover, no boundary terms appear at $t=0$ or $t=T$ since $u,\p_t u$ vanish at $t=0$ while $(t,x)\mapsto\eta_j(x)\,\mathcal W_{j,\tau_N}(t,x)$ is supported away from $\{T\}\times\tilde\Omega$. This implies that
\begin{equation}
\label{I_exp}
I_N^{j}=\int_{(0,T)\times\tilde\Omega} e^{-{\rm i}\tau_N(-t+\xi_j\cdot x)}\, u\, \underbrace{\eta_j\,(\Box+V) w_{j,N}}_{\tilde{w}_{j,N}}\,dx\,dt
\end{equation}
Let us record in passing that the function $\tilde{w}_{j,N}$ is compactly supported in $(0,T)\times \tilde{\Omega}$ and that 
\begin{equation}
\label{w_bound}
\|w_{j,N}\|_{H^k((0,T)\times \tilde{\Omega})}+N^{-2}\|\tilde{w}_{j,N}\|_{H^k((0,T)\times \tilde{\Omega})}< C \kappa_j \, N^{k}\quad\text{for}\quad \, k=0,1,2
\end{equation}
for some $C>0$ independent of $j$ and $N$, where we recall that $\kappa_j$ is as defined in \eqref{kappa}.
For the remainder of this section, we aim to prove the following theorem.
\begin{theorem}
\label{t1}
Let $f$ be the universal source given by \eqref{universal_source} and let $$V \in \mathcal C^4([0,T]\times\Omega)\cap \mathcal C([0,T];\mathcal C^4_0(\Omega)).$$ 
For each $j \in \N$, there holds: 
\begin{equation}
\label{t1a} \lim_{N \to \infty} \left(c_N^{-1}\,I_N^{j}-S^{j}_N\right) =\frac{\sqrt{2}}{2}\,b_j\,\int_{\R} V(\gamma_j(s))\,ds.\end{equation}
where $I_N^j$ is as defined in \eqref{I_N} and 
$$ S_N^{j} =\sum_{k=1}^{\infty}b_{k} \int_{(0,T)\times \tilde{\Omega}}e^{{\rm i}\tau_N(\xi_k-\xi_j)\cdot x}\, \eta_j(x)\,\zeta_{k,-}(t)\,v_{k,\tau_N}^{(0)}(t,x)\,\Box w_{j,N}(t,x)\,dx\,dt$$
is an explicit constant depending only on $N$, $j$, $T$, $\tilde\Omega$ and $\Omega$.
\end{theorem}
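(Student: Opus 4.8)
The plan is to start from the identity \eqref{I_exp}, namely
\[
I_N^{j}=\int_{(0,T)\times\tilde\Omega} e^{-{\rm i}\tau_N(-t+\xi_j\cdot x)}\, u\, \tilde{w}_{j,N}\,dx\,dt,
\]
and substitute the series expansion \eqref{u_exp} for $u$, writing $u=\sum_{k}b_k\sum_{m}c_m\,u_{k,\tau_m}$. Using \eqref{correction_term}, replace each $u_{k,\tau_m}$ by $\zeta_{k,-}(t)\,\mathcal U_{k,\tau_m}+R_{k,\tau_m}$ and estimate the total contribution of the correction terms $R_{k,\tau_m}$ by summing the bound \eqref{correction_bound} against $c_m$ and $b_k$, together with the $H^{-2}$-type pairing against $\tilde w_{j,N}$ via \eqref{w_bound}; since $c_N^{-1}=N^3\tau_N$ and the $R$-terms carry $\tau_m^{-1}(\log\tau_m)^6$, the off-diagonal and remainder contributions should go to $0$ after multiplication by $c_N^{-1}$, provided the $\{b_j\}$ decay is chosen fast enough (this is the source of condition \eqref{b_sequence}). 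The upshot is that, modulo a vanishing error,
\[
c_N^{-1}I_N^{j}\;\approx\; c_N^{-1}\sum_{k,m} b_k c_m \int e^{-{\rm i}\tau_N(-t+\xi_j\cdot x)}\,\zeta_{k,-}(t)\,\mathcal U_{k,\tau_m}(t,x)\,\tilde w_{j,N}(t,x)\,dx\,dt.
\]

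Next I would isolate the resonant term $m=N$: because $\mathcal U_{k,\tau_m}$ oscillates at frequency $\tau_m$ in the phase $-t+\xi_k\cdot x$ and $\mathcal W_{j,\tau_N}$ (hence $\tilde w_{j,N}$) oscillates at frequency $\tau_N$ in the phase $-t+\xi_j\cdot x$, a non-stationary phase / integration-by-parts argument in the variable along $(1,\xi_j)$ shows the terms with $m\neq N$ are $O(\tau_N^{A}|\tau_m-\tau_N|^{-M})$ for any $M$, and summing these against $c_m$ and then multiplying by $c_N^{-1}=N^3\tau_N$ still yields a vanishing contribution (the exponential spacing $\tau_m=e^m$ makes the frequency gaps large). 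For $m=N$ the product $e^{-{\rm i}\tau_N(-t+\xi_j\cdot x)}\mathcal U_{k,\tau_N}= e^{{\rm i}\tau_N(\xi_k-\xi_j)\cdot x}v_{k,\tau_N}$; recalling $v_{k,\tau_N}=v_{k,\tau_N}^{(0)}+\tau_N^{-1}v_{k,\tau_N}^{(1)}+\tau_N^{-2}v_{k,\tau_N}^{(2)}$ and $c_N^{-1}c_N=1$, and that $\tilde w_{j,N}=\eta_j(\Box+V)w_{j,N}$, the leading piece of $c_N^{-1}I_N^j$ is
\[
\sum_{k}b_k\int e^{{\rm i}\tau_N(\xi_k-\xi_j)\cdot x}\eta_j\,\zeta_{k,-}\,v_{k,\tau_N}^{(0)}\,(\Box+V)w_{j,N}\,dx\,dt + (\text{lower order in }\tau_N).
\]
The $\Box w_{j,N}$ part of this is precisely the subtracted quantity $S_N^{j}$ in the statement, so after subtracting it we are left with the $V w_{j,N}$ part plus the contributions of $v^{(1)},v^{(2)}$.

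The remaining task is to identify $\lim_{N\to\infty}\sum_k b_k\int e^{{\rm i}\tau_N(\xi_k-\xi_j)\cdot x}\eta_j\,\zeta_{k,-}\,v_{k,\tau_N}^{(0)}\,V\,w_{j,N}\,dx\,dt$ together with the $v^{(1)}$ correction. Here one uses that $v_{k,\tau_N}^{(0)}$ and $w_{j,N}$ are normalized wave packets of width $\sim \delta/\log\tau_N$ resp. $\sim\delta/N$ about the rays $\gamma_k$ and $\gamma_j$: for $k\neq j$ the rays $\gamma_k,\gamma_j$ are distinct lines, so for $N$ large their tubular neighborhoods of shrinking width become disjoint and those terms vanish; the $k=j$ term survives. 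For $k=j$ one has $\xi_k=\xi_j$ so the oscillatory factor is $1$, $\zeta_{j,-}\equiv1$ along the ray inside $(0,T)\times\Omega$, $\eta_j\equiv1$ on $\Omega$, and $V$ is supported in $\Omega$; changing to coordinates $(s,y)$ adapted to the ray ($s$ along $(1,\xi_j)$, $y$ transverse) the product $v_{j,\tau_N}^{(0)}w_{j,N}$ collapses, by the $L^2$-normalization $\|\chi\|_{L^2}=1$ and the matched Gaussian-type profiles, to a one-dimensional integral $\int_\R V(\gamma_j(s))\,ds$ with a fixed Jacobian constant, which I expect to produce the factor $\tfrac{\sqrt2}{2}$. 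The $v_{j,\tau_N}^{(1)}$ term carries an extra $\tau_N^{-1}$ and, although multiplied by $(\log\tau_N)^{O(1)}$ from \eqref{amp_bounds}, vanishes in the limit; similarly $v^{(2)}$. The main obstacle is the bookkeeping in this last step: one must carefully track how the two different packet widths ($\log\tau_N$ versus $N$) interact in the transverse integral and confirm that the mismatch still yields the clean constant $\tfrac{\sqrt2}{2}$, and simultaneously verify the uniform-in-$N$ summability in $k$ and $m$ that licenses exchanging limit and summation — this is exactly where the decay conditions on $\{b_j\}$ (via \eqref{b_sequence}) and the choice $c_k=k^{-3}\tau_k^{-1}$ are used.
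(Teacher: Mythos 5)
Your overall architecture matches the paper's: substitute the double series for $u$ into \eqref{I_exp}, kill the correction terms $R_{k,\tau_m}$ and the non-resonant frequencies $m\neq N$ by integrating by parts in $t$ against the phase (the exponential gaps $|\tau_m-\tau_N|\geq \tau_N/2$ make two integrations by parts sufficient, given the choice $c_m=m^{-3}\tau_m^{-1}$), identify the $\Box w_{j,N}$ part of the resonant term with $S_N^j$, and compute the surviving diagonal term in ray-adapted coordinates. One remark that works in your favor: since $\tau_N=e^N$ one has $\log\tau_N=N$, so $v^{(0)}_{j,\tau_N}=w_{j,N}$ exactly; there is no mismatch of packet widths to track in the final step, and the transverse null integral $\frac{N}{\delta_j}\int_\R\chi^2(\sqrt2\,N\delta_j^{-1}y^1)\,dy^1=\frac{\sqrt2}{2}$ is what produces the constant.

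The genuine gap is in your disposal of the off-diagonal resonant terms ($k\neq j$, $m=N$), namely the claim that ``for $N$ large their tubular neighborhoods of shrinking width become disjoint and those terms vanish.'' First, this is false for a fixed $k$ whenever $\gamma_k$ and $\gamma_j$ cross or pass very close: tubes about crossing lines overlap near the crossing for every positive width, and for such $k$ one must instead exploit the oscillation $e^{{\rm i}\tau_N(\xi_k-\xi_j)\cdot x}$ (crossing rays force $\xi_k\neq\xi_j$) by integrating by parts in $x$ along $\xi_k-\xi_j$. Second, and more seriously, the sum over $k\neq j$ is infinite and $\mathbb V$ is \emph{dense}: for every fixed $N$ there are infinitely many $k$ with $\xi_k$ arbitrarily close to $\xi_j$ and $\gamma_k$ arbitrarily close to $\gamma_j$, so neither disjointness of supports nor non-stationary phase yields a bound summable in $k$ uniformly in $N$, and the interchange of $\lim_{N\to\infty}$ with $\sum_{k\neq j}$ cannot be waved through as bookkeeping. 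The paper's resolution is a quantitative dichotomy: it introduces the threshold $h_j(N)=\min\{k\neq j:\ |\xi_k-\xi_j|<\tau_N^{-1/2},\ \theta_{k,j}<(\delta_j+\delta_k)N^{-1/2}\}$, proves $h_j(N)\to\infty$ by a density/contradiction argument, and splits the sum so that for $k<h_j(N)$ either the supports of $w_{j,N}$ and $v^{(0)}_{k,\tau_N}$ are genuinely disjoint or one integration by parts in $x$ gives an $O(\kappa_j\kappa_k N\tau_N^{-1/2})$ bound, while the tail $k\geq h_j(N)$ is absorbed by $\sum_{k\geq h_j(N)}b_k\kappa_k\to0$ via \eqref{b_sequence}. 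Without some such device your argument does not close.
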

We remark that for each fixed $j$ and $N$, the expression for $S_N^j$ is well-defined by \eqref{b_sequence} and \eqref{w_bound}. Let us make a preliminary computation to divide the analysis of the limit in Theorem~\ref{t1} into two components. To this end we use \eqref{u_exp} to write
\[
\begin{aligned}
I_N^{j}&=\int_{(0,T)\times\tilde\Omega} \left(\sum_{k=1}^{\infty}b_k\, e^{-{\rm i}\tau_N(-t+\xi_j\cdot x)}\, u_k\,\tilde{w}_{j,N}\right)\,dx\,dt\\
&=\sum_{k=1}^{\infty}b_k\,\int_{(0,T)\times\tilde\Omega} e^{-{\rm i}\tau_N(-t+\xi_j\cdot x)}\, u_{k}\,\tilde{w}_{j,N}\,dx\,dt\\
&= \sum_{k=1}^{\infty} b_k\,\int_{(0,T)\times \tilde\Omega} e^{-{\rm i}\tau_N(-t+\xi_j\cdot x)}\,\left(\sum_{\ell=1}^{\infty} c_\ell u_{k,\tau_\ell}\right)\,\tilde{w}_{j,N}\,dx\,dt\\
&= \sum_{k=1}^{\infty} b_k\,\sum_{\ell=1}^{\infty}c_\ell\,\int_{(0,T)\times \tilde\Omega} e^{-{\rm i}\tau_N(-t+\xi_j\cdot x)}\,u_{k,\tau_\ell}\,\tilde{w}_{j,N}\,dx\,dt
\end{aligned}
\]
The interchanging of the integration and the limits are justified by \eqref{conv_u}. The latter expression can be rewritten as

$$ I_N^{j}=b_j\,J_N^{j}+K_N^{j}$$ 
where
\begin{equation}
\label{JK_def}
 \begin{aligned}
 J_N^{j}&=\sum_{\ell=1}^{\infty}c_\ell\,\int_{(0,T)\times \tilde\Omega} e^{-{\rm i}\tau_N(-t+\xi_j\cdot x)}\,u_{j,\tau_\ell}\,\tilde{w}_{j,N}\,dx\,dt \\
 K_N^{j}&= \sum_{k\neq j}b_{k}\, \sum_{\ell=1}^{\infty}c_\ell\,\int_{(0,T)\times \tilde\Omega} e^{-{\rm i}\tau_N(-t+\xi_j\cdot x)}\,u_{k,\tau_\ell}\,\tilde{w}_{j,N}\,dx\,dt.
\end{aligned}
\end{equation}

We proceed to study asymptotic behavior of these two terms as $N$ approaches infinity.

\begin{remark}
\label{C_generic}
In what follows, we will use the symbol $C$ to denote a generic positive constant that is independent of the indices $j$ and $N$ in $I^j_N$ and that only depends on $T$, $\tilde\Omega$, $\Omega$ and $\|V\|_{C^4((0,T)\times\tilde\Omega)}$.
\end{remark}

\subsection{Asymptotic analysis of $J_N^{j}$}

The aim of this section is to prove the following lemma.

\begin{lemma}
\label{lem_J_bound}
Let $j\in\N$ and $J_N^{j}$ be defined through \eqref{JK_def}. Then
$$\lim_{N\to \infty} \left|c_N^{-1}\,J_N^{j}- \int_{(0,T)\times \tilde \Omega}\zeta_{j,-}(t)\,\tilde{w}_{j,N}\,v^{(0)}_{j,\tau_N}\,dx\, dt\right|=0.$$
\end{lemma}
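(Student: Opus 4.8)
The plan is to isolate the $\ell=N$ term in the series defining $J_N^j$ and show that all other terms vanish after dividing by $c_N$. First I would split $c_N^{-1}J_N^j$ into the diagonal contribution $\ell=N$ and the off-diagonal sum $\sum_{\ell\neq N}$. For the diagonal term, I would invoke the decomposition \eqref{correction_term}, writing $u_{j,\tau_N}=\zeta_{j,-}\,\mathcal U_{j,\tau_N}+R_{j,\tau_N}$. Substituting $\mathcal U_{j,\tau_N}=e^{{\rm i}\tau_N(-t+\xi_j\cdot x)}v_{j,\tau_N}$ into the integrand, the oscillatory factor $e^{-{\rm i}\tau_N(-t+\xi_j\cdot x)}$ cancels exactly against the phase in $\mathcal U_{j,\tau_N}$, leaving $\int \zeta_{j,-}\,\tilde w_{j,N}\,v_{j,\tau_N}\,dx\,dt$. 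Since $v_{j,\tau_N}=v_{j,\tau_N}^{(0)}+\tau_N^{-1}v_{j,\tau_N}^{(1)}+\tau_N^{-2}v_{j,\tau_N}^{(2)}$, the lower-order amplitude terms contribute $O(\tau_N^{-1}(\log\tau_N)^{\text{pow}}\cdot N^2)$ after using the bounds \eqref{amp_bounds} and \eqref{w_bound} (recall $\|\tilde w_{j,N}\|_{L^2}\lesssim N^2$), which tends to $0$; likewise the remainder term contributes $\int R_{j,\tau_N}\,e^{-{\rm i}\tau_N(-t+\xi_j\cdot x)}\tilde w_{j,N}$, bounded by $\|R_{j,\tau_N}\|_{L^2}\|\tilde w_{j,N}\|_{L^2}\lesssim \tau_N^{-1}(\log\tau_N)^6\cdot N^2\to 0$ by \eqref{correction_bound}. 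So the diagonal term converges to $\int \zeta_{j,-}\,\tilde w_{j,N}\,v_{j,\tau_N}^{(0)}\,dx\,dt$, which is exactly the quantity in the statement (the difference between $v_{j,\tau_N}$ and $v_{j,\tau_N}^{(0)}$ inside that limit being again $O(\tau_N^{-1}\cdot\text{poly})$, absorbable into the limit).

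The bulk of the work is the off-diagonal estimate: showing $c_N^{-1}\sum_{\ell\neq N}c_\ell\int e^{-{\rm i}\tau_N(-t+\xi_j\cdot x)}u_{j,\tau_\ell}\,\tilde w_{j,N}\,dx\,dt\to 0$. Here I would again use $u_{j,\tau_\ell}=\zeta_{j,-}e^{{\rm i}\tau_\ell(-t+\xi_j\cdot x)}v_{j,\tau_\ell}+R_{j,\tau_\ell}$. The remainder parts are summed crudely: $\sum_\ell c_\ell\|R_{j,\tau_\ell}\|_{L^2}\|\tilde w_{j,N}\|_{L^2}\lesssim N^2\sum_\ell c_\ell\tau_\ell^{-1}(\log\tau_\ell)^6$, which converges (since $c_\ell\tau_\ell^{-1}=\ell^{-3}\tau_\ell^{-2}$ and the $\ell^{-3}$ beats any power of $\log$), so after dividing by $c_N=N^{-3}\tau_N^{-1}$ we get growth $N^{3}\tau_N N^2$ times a convergent tail — this is \emph{not} obviously small, so I instead keep the $\ell=N$ remainder inside the diagonal analysis (done above) and for $\ell\neq N$ bound the remainder contribution more carefully using that $\|\tilde w_{j,N}\|_{H^1}\lesssim N^3$ pairs against $R_{j,\tau_\ell}\in H^2$ — actually the clean route is to integrate by parts once in the phase variable to gain a factor $|\tau_\ell-\tau_N|^{-1}$. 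The genuinely oscillatory term is $\int \zeta_{j,-}\,e^{{\rm i}(\tau_\ell-\tau_N)(-t+\xi_j\cdot x)}v_{j,\tau_\ell}\,\tilde w_{j,N}\,dx\,dt$: since $\tau_\ell\neq\tau_N$, the phase $(\tau_\ell-\tau_N)(-t+\xi_j\cdot x)$ is non-stationary and non-degenerate in the $(-t+\xi_j\cdot x)$ direction (its gradient is the fixed nonzero covector $(\tau_\ell-\tau_N)(-1,\xi_j)$), so repeated integration by parts — using that $\zeta_{j,-}v_{j,\tau_\ell}\tilde w_{j,N}\in C^2_c$ with controlled $C^2$ norm — yields decay $|\tau_\ell-\tau_N|^{-2}$ times a polynomial in $\log\tau_\ell$ and $N$ (from the amplitude and $\tilde w_{j,N}$ bounds \eqref{amp_bounds}, \eqref{w_bound}). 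Then $c_N^{-1}c_\ell|\tau_\ell-\tau_N|^{-2}\times\text{poly}$ summed over $\ell\neq N$ must be shown to tend to $0$; since $\tau_\ell=e^\ell$, one has $|\tau_\ell-\tau_N|\geq \tfrac12\max(\tau_\ell,\tau_N)$ for $\ell\neq N$, so $|\tau_\ell-\tau_N|^{-2}\leq 4\tau_N^{-2}$ when $\ell<N$ and $\leq 4\tau_\ell^{-2}$ when $\ell>N$, which makes the sum manifestly convergent and of size $o(c_N)$ after accounting for the polynomial factors.

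I would organize the write-up as: (1) the split into $\ell=N$ and $\ell\neq N$; (2) a sublemma computing the $\ell=N$ term via \eqref{correction_term} and \eqref{correction_bound}, identifying it with $\int\zeta_{j,-}\tilde w_{j,N}v_{j,\tau_N}^{(0)}$; (3) a sublemma bounding each $\ell\neq N$ integral by $C\kappa_j|\tau_\ell-\tau_N|^{-2}(\log\tau_\ell)^{p}N^{q}$ via two integrations by parts along the null codirection $(-1,\xi_j)$; (4) summing with the exponential separation of the $\tau_\ell$ to conclude. The main obstacle is step (3): carrying out the integration by parts cleanly requires expressing the differential operator dual to the phase, checking that all derivatives falling on the amplitude $\zeta_{j,-}v_{j,\tau_\ell}^{(k)}\tilde w_{j,N}$ stay within the $C^2$ bounds already recorded (note $v_{j,\tau_\ell}^{(k)}$ has $C^\ell$ norm growing like $(\log\tau_\ell)^{n/2+2k+\ell}$, and the cutoffs $\zeta_{j,-}$, $\eta_j$ contribute $\delta_j^{-1}$-type constants absorbed in $\kappa_j$), and verifying there are no boundary terms (true since everything is compactly supported in $(0,T)\times\tilde\Omega$ by the support properties established for $f_{j,\tau}$ and $\tilde w_{j,N}$). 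A secondary subtlety is the bookkeeping of powers of $N$ versus $\ell$: one must make sure the polynomial-in-$N$ loss from $\|\tilde w_{j,N}\|_{C^2}\lesssim N^{2+n/2}\cdot\kappa_j$ together with $c_N^{-1}=N^3\tau_N$ is still beaten by the $\tau_N^{-2}$ (for $\ell<N$) gain, which it is since $\tau_N=e^N$ decays faster than any power of $N$ grows.
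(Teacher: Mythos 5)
Your proposal follows essentially the same route as the paper: isolate the $\ell=N$ term, use the exact phase cancellation there together with \eqref{correction_bound} and \eqref{a_approx}, and kill the $\ell\neq N$ terms by integration by parts along the phase combined with the exponential separation $|\tau_\ell-\tau_N|\geq\tau_N/2$; the paper merely organizes the split differently (first geometric-optics part versus remainders $R_{j,\tau_\ell}$, then $\ell=N$ versus $\ell\neq N$), which is cosmetic. One quantitative slip to fix: for the remainder contributions $\int e^{-{\rm i}\tau_N(-t+\xi_j\cdot x)}R_{j,\tau_\ell}\,\tilde w_{j,N}$ the gain per integration by parts is $\tau_N^{-1}$, not $|\tau_\ell-\tau_N|^{-1}$ (the correction term $R_{j,\tau_\ell}$ carries no oscillatory factor), and a single integration by parts leaves $c_N^{-1}\tau_N^{-1}\cdot\mathrm{poly}(N)=N^3\tau_N\cdot\tau_N^{-1}\cdot\mathrm{poly}(N)$, which does not vanish; you need two, exactly as the paper does, using $\|R_{j,\tau_\ell}\|_{H^2}\lesssim\tau_\ell^{-1}(\log\tau_\ell)^6$ and $\|\tilde w_{j,N}\|_{H^2}\lesssim\kappa_j N^4$ to get $c_N^{-1}\tau_N^{-2}N^4\lesssim N^7\tau_N^{-1}\to 0$.
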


Applying the definition \eqref{correction_term}, we split the expression for $J_N^{j}$ into two terms $J_N^{j}=J_{1,N}^{j}+J_{2,N}^{j}$ where
$$J_{1,N}^{j}=\sum_{\ell=1}^{\infty}c_\ell\,\int_{(0,T)\times \tilde\Omega} e^{-{\rm i}\tau_N(-t+\xi_j\cdot x)}\,\zeta_{j,-}(t)\,\mathcal U_{j,\tau_\ell}\,\tilde{w}_{j,N}\,dx\,dt$$
and
$$J_{2,N}^{j}=\sum_{\ell=1}^{\infty}c_\ell\,\int_{(0,T)\times \tilde\Omega} e^{-{\rm i}\tau_N(-t+\xi_j\cdot x)}\,R_{j,\tau_\ell}\,\tilde{w}_{j,N}\,dx\,dt.$$
Note that this breaking of the infinite series $J_N^{j}$ is justified again since each of the series $J_{1,N}^{j}$ and $J_{2,N}^{j}$ are absolutely convergent. 
\subsubsection{Asymptotic analysis of $J_{1,N}^{j}$}
\label{subsection_1}
Observe that 
\begin{multline*}
J_{1,N}^{j}=c_N  \int_{(0,T)\times \tilde \Omega}\zeta_{j,-}\,\tilde{w}_{j,N}\,v_{j,\tau_N}\,dx\,dt\\
+\sum_{\ell\neq N}c_\ell\,(\tau_\ell-\tau_N)^{-2} \int_{(0,T)\times \tilde \Omega} \p^2_t\left(\zeta_{j,-}\,\tilde{w}_{j,N}\,v_{j,\tau_\ell}\right)\, e^{{\rm i}(\tau_\ell-\tau_N)(-t+\xi_j\cdot x)}\,dx\,dt,
\end{multline*}
where we have isolated the summation index $\ell=N$ and performed integration by parts with respect to the time variable twice in the summation over indices $\ell \neq N$, also using the fact that the function $\tilde{w}_{j,N}$ is compactly supported in the set $(0,T)\times \tilde\Omega$. Next, recalling the definition \eqref{kappa} together with the estimates \eqref{amp_bound}, \eqref{w_bound} and $$\|\zeta_{j,\pm}\|_{\mathcal C^2((0,T)\times\tilde\Omega)}\leq C\delta_j^{-2},$$
we obtain that
\begin{equation}
\label{alt_w_bound}
\|\zeta_{j,-}\,\tilde{w}_{j,N}\,v_{j,\tau_\ell}\|_{H^2((0,T)\times\tilde\Omega)}< C\kappa_j^2 \,N^{4}\,\ell^2.
\end{equation}
Combining this with the bound
\begin{equation}
\label{tau_diff}
|\tau_\ell-\tau_N|=|e^\ell-e^N|\geq |e^{N-1}-e^N|=\frac{e-1}{e}e^N\geq \frac{\tau_N}{2}\quad \quad \ell\neq N,
\end{equation}
we deduce that 
\begin{equation}
\label{J1_bound}\begin{aligned}
\left|c_N^{-1}J_{1,N}^{j}- \int_{(0,T)\times \tilde \Omega}\zeta_{j,-}\,\tilde{w}_{j,N}\,v_{j,\tau_N}\,dx\,dt\right| &\leq 4C\kappa_j^2 N^{4}c_N^{-1} \tau^{-2}_N (\sum_{\ell\neq N} c_\ell\,\ell^2) \\
&\leq 4C\kappa_j^2 N^{7} \tau^{-1}_N,\end{aligned}
\end{equation}
where we recall the notation from Remark~\ref{C_generic} that $C>0$ is a constant independent of $j$ and $N$.
\subsubsection{Asymptotic analysis of $J_{2,N}^{j}$}
\label{subsection_2}
We write
\begin{equation}
\label{J2_bound}
\begin{aligned}
|J_{2,N}^{j}|&=\left| \sum_{\ell=1}^{\infty}c_\ell\,\int_{(0,T)\times \tilde\Omega} e^{-{\rm i}\tau_N(-t+\xi_j\cdot x)}\,R_{j,\tau_\ell}\,\tilde{w}_{j,N}\,dx\,dt \right|\\
&=\left|\sum_{\ell=1}^{\infty} c_\ell\,\tau_N^{-2}\int_{(0,T)\times\tilde\Omega} \p^2_t\left(\tilde{w}_{j,N}\,R_{j,\tau_\ell}\right)e^{-{\rm i}\tau_N(-t+\xi_j\cdot x)}\,dx\,dt\right|\\
&<\sum_{\ell=1}^{\infty}c_\ell\,\tau_N^{-2} \|R_{j,\tau_\ell}\|_{H^2((0,T)\times\tilde\Omega)}\|\tilde{w}_{j,N}\|_{H^2((0,T)\times\tilde\Omega)}\\
&< \kappa_j^2\,\tau_N^{-2}N^{4} (\sum_{\ell=1}^{\infty} c_\ell\,\tau_\ell^{-1}\,\ell^6)<C\kappa_j^2\,\tau_N^{-2}\,N^{4}
\end{aligned}
\end{equation}
where we have integrated by parts in time twice and used the bounds \eqref{correction_bound} and \eqref{w_bound}. Combining the bounds given by \eqref{J1_bound} and \eqref{J2_bound}, together with the fact that
\begin{equation}
\label{limit_iden}
\lim_{N\to\infty}c_N^{-1}\tau_N^{-2}N^{4}=\lim_{N\to\infty}\tau_N^{-1}N^{7}=0
\end{equation}

\begin{proof}[Proof of Lemma~\ref{lem_J_bound}]
Note that by combining the estimates for $J_{1,N}^j$ and $J^j_{2,N}$ we have shown that
\begin{equation}\label{lem_J_almost}\lim_{N\to \infty} \left|c_N^{-1}\,J_N^{j}- \int_{(0,T)\times \tilde \Omega}\zeta_{j,-}(t)\,\tilde{w}_{j,N}\,v_{j,\tau_N}\,dx\, dt\right|=0.\end{equation}
Moreover, using \eqref{ansatz} and \eqref{amp_bounds} we obtain
\begin{equation}
\label{a_approx}
\|v_{j,\tau_N}-v_{j,\tau_N}^{(0)}\|_{L^2((0,T)\times\Omega)}\leq C\kappa_j\,N^2\,\tau_N^{-1} \quad \forall \, j\in \N.
\end{equation}
This bound implies that
$$\begin{aligned}&\lim_{N\to\infty} \left|\int_{(0,T)\times\tilde\Omega}\zeta_{j,-}\,\tilde{w}_{j,N}\,v_{j,\tau_N}^{(0)}\,dx\,dt-\int_{(0,T)\times\tilde\Omega}\zeta_{j,-}\,\tilde{w}_{j,N}v_{j,\tau_N}\,dx\,dt\right|\\
&=\lim_{N\to\infty}N^2\tau_N^{-1}=0.\end{aligned}$$
The claim now follows immediately from this estimate and \eqref{lem_J_almost}.
\end{proof}
\subsection{Asymptotic analysis of $K_N^{j}$} 
In this section we prove the following lemma.

\begin{lemma}
\label{lem_K_bound}
Let $j\in \N$ and $K_N^{j}$ be defined through \eqref{JK_def}. Then
$$\lim_{N\to \infty} \left|c_N^{-1}\,K_N^{j}-\sum_{k\neq j}b_{k} \int_{(0,T)\times \tilde{\Omega}}e^{{\rm i}\tau_N  (\xi_k-\xi_j)\cdot x}\, \eta_j\,\zeta_{k,-}\,v^{(0)}_{k,\tau_N}\,\Box w_{j,N}\,dx\,dt\right|=0.$$
\end{lemma}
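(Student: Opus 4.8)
The goal is to analyze $K_N^j$, which involves the interaction of wave packets along $\gamma_j$ with source-generated solutions along different light rays $\gamma_k$, $k \neq j$. The key point is that when $\xi_k \neq \xi_j$, the phases do not match, and we should be able to extract a leading term via integration by parts in a direction where the phase $(\xi_k - \xi_j)\cdot x$ (together with the frequency mismatch when using $u_{k,\tau_\ell}$ for $\ell \neq N$) is nonstationary, and separately handle the diagonal-in-$\ell$ contribution $\ell = N$. The first step is to substitute the decomposition \eqref{correction_term} for $u_{k,\tau_\ell}$, splitting $K_N^j$ into a main part $K_{1,N}^j$ carrying $\zeta_{k,-}\mathcal U_{k,\tau_\ell}$ and a remainder part $K_{2,N}^j$ carrying $R_{k,\tau_\ell}$. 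For $K_{2,N}^j$, I would integrate by parts twice in $t$ against the phase $e^{-{\rm i}\tau_N(-t+\xi_j\cdot x)}$, gaining $\tau_N^{-2}$, and use the correction bound \eqref{correction_bound} together with \eqref{w_bound}; this mirrors the estimate \eqref{J2_bound} for $J_{2,N}^j$ exactly, except that we must also sum over $k \neq j$ — which is permissible because the $H^2$-bound on $R_{k,\tau_\ell}$ comes with the constant $\kappa_k$, and $\sum_k b_k \kappa_k < \infty$ by \eqref{b_sequence}.

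**The main part.** For $K_{1,N}^j = \sum_{k \neq j} b_k \sum_{\ell} c_\ell \int e^{-{\rm i}\tau_N(-t+\xi_j\cdot x)}\,\zeta_{k,-}\,\mathcal U_{k,\tau_\ell}\,\tilde w_{j,N}\,dx\,dt$, I substitute the ansatz \eqref{ansatz}, so $\mathcal U_{k,\tau_\ell} = e^{{\rm i}\tau_\ell(-t+\xi_k\cdot x)}v_{k,\tau_\ell}$, giving the combined phase $e^{{\rm i}[(\tau_\ell-\tau_N)(-t) + (\tau_\ell\xi_k - \tau_N\xi_j)\cdot x]}$. I would then isolate $\ell = N$: there the $t$-part of the phase vanishes, leaving $e^{{\rm i}\tau_N(\xi_k - \xi_j)\cdot x}$, and the term becomes $c_N \sum_{k\neq j} b_k \int e^{{\rm i}\tau_N(\xi_k-\xi_j)\cdot x}\,\zeta_{k,-}\,v_{k,\tau_N}\,\tilde w_{j,N}\,dx\,dt$. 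Recalling that $\tilde w_{j,N} = \eta_j(\Box+V)w_{j,N}$ from \eqref{I_exp}, and that $V$ is supported in $\Omega$ where $\eta_j \equiv 1$ and where, crucially, the amplitude $v_{k,\tau_N}$ should be replaced by $v_{k,\tau_N}^{(0)}$ at the cost of $O(N^2\tau_N^{-1})$ via \eqref{a_approx}, and the $V w_{j,N}$ part of $\tilde w_{j,N}$ contributes a lower-order term, one arrives at the claimed limiting expression $\sum_{k\neq j} b_k \int e^{{\rm i}\tau_N(\xi_k-\xi_j)\cdot x}\,\eta_j\,\zeta_{k,-}\,v_{k,\tau_N}^{(0)}\,\Box w_{j,N}\,dx\,dt$. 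For the terms $\ell \neq N$, I integrate by parts twice in $t$ exploiting $|\partial_t(\text{phase})| = |\tau_\ell - \tau_N| \geq \tau_N/2$ by \eqref{tau_diff}, picking up $(\tau_\ell-\tau_N)^{-2}$; the derivatives land on $\zeta_{k,-}v_{k,\tau_\ell}\tilde w_{j,N}$, bounded in $H^2$ by $C\kappa_k^2 \kappa_j N^4 \ell^2$-type quantities via \eqref{amp_bound}, \eqref{w_bound}, and the $\zeta$-bounds, and the resulting series $\sum_k b_k\kappa_k^2 \sum_\ell c_\ell \ell^2$ converges after noting $c_\ell = \ell^{-3}\tau_\ell^{-1}$; dividing by $c_N$ leaves a factor $c_N^{-1}\tau_N^{-2}N^4 \to 0$ by \eqref{limit_iden}. (One must be slightly careful that $\sum_k b_k \kappa_k^2$ need not converge a priori; this is handled by observing that for the off-diagonal-in-$\ell$ estimate one only needs the weaker decay and can absorb one factor of $\kappa_k$ into $b_k$ via \eqref{b_sequence} while controlling the rest by the a priori $C^4$-bound on $V$ as in Remark~\ref{C_generic}, or alternatively by choosing the $b_j$ in \eqref{b_sequence} to also satisfy $\sum_j b_j \kappa_j^2 < \infty$, which costs nothing.)

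**The expected obstacle.** The delicate point is justifying all the interchanges of the triple summation $\sum_k \sum_\ell$ with the integration and with the limit $N \to \infty$, and in particular ensuring the $k$-sum of the error terms converges uniformly in $N$. Unlike $J_N^j$, which involved only the single index $k = j$, here the full coupling between distinct light rays appears, and the estimates must be structured so that every error bound carries an explicit $\kappa_k$ (or $\kappa_k^2$) that is summable against $b_k$. A secondary subtlety is that $w_{j,N}$ — unlike the full geometric-optics amplitude — is a single cutoff rather than an asymptotic series, so $\Box w_{j,N}$ is genuinely of size $N^2$ and does not enjoy any cancellation; this is precisely why $\Box w_{j,N}$ (and not a remainder) appears in the limiting expression for $S_N^j$, and one must keep it intact rather than attempt to integrate it by parts away. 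Once these bookkeeping issues are handled, replacing $v_{k,\tau_N}$ by $v_{k,\tau_N}^{(0)}$ via \eqref{a_approx} and dropping the $Vw_{j,N}$ contribution are routine, and the lemma follows.
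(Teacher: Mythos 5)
Your decomposition of $K_N^j$ — substituting \eqref{correction_term}, isolating the $\ell=N$ diagonal from the $\ell\neq N$ off-diagonal, and peeling off the remainder terms $R_{k,\tau_\ell}$ — matches the paper's split into $K_{1,N}^j,K_{2,N}^j,K_{3,N}^j$, and your treatment of the off-diagonal and remainder parts (two integrations by parts in $t$ against the frequency mismatch $|\tau_\ell-\tau_N|\geq\tau_N/2$, then \eqref{correction_bound}, \eqref{w_bound}, and summability of $b_k\kappa_k$) is essentially the paper's. The replacement of $v_{k,\tau_N}$ by $v_{k,\tau_N}^{(0)}$ via \eqref{a_approx} is also handled correctly.

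The genuine gap is your claim that ``the $V w_{j,N}$ part of $\tilde w_{j,N}$ contributes a lower-order term'' and that ``dropping the $Vw_{j,N}$ contribution [is] routine.'' This is not routine; it is the hardest and most delicate step of the lemma, and you supply no argument for it. The quantity in question is
$$\sum_{k\neq j} b_k \int_{(0,T)\times\tilde\Omega} e^{{\rm i}\tau_N(\xi_k-\xi_j)\cdot x}\,\zeta_{k,-}\,v_{k,\tau_N}\,\eta_j\,V\,w_{j,N}\,dx\,dt,$$
and a crude Cauchy--Schwarz bound on each summand gives only $O(\kappa_j\kappa_k)$, a constant that does not vanish as $N\to\infty$. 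For each \emph{fixed} $k\neq j$ the integral does tend to zero — either by nonstationary-phase integration by parts in the spatial direction $\xi_k-\xi_j$ (gaining $\tau_N^{-1}|\xi_k-\xi_j|^{-1}\cdot N$), or because the $O(\delta/N)$-thick tubular supports of $w_{j,N}$ and $v_{k,\tau_N}$ eventually become disjoint — but the rate of decay degenerates as $|\xi_k-\xi_j|\to 0$ or as the spatial rays approach each other, and the set $\mathbb V$ is dense, so neither quantity is bounded away from zero uniformly in $k$. Uniformity in $k$ is exactly what fails in your argument.

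The paper's proof overcomes this with a specific device you don't have: the threshold function $h_j(N)=\min\{k\neq j:\,|\xi_k-\xi_j|<\tau_N^{-1/2}\text{ and }\theta_{k,j}<(\delta_j+\delta_k)N^{-1/2}\}$ together with the limit \eqref{density}, which says $h_j(N)\to\infty$. The $k$-sum is then split at $h_j(N)$: for $k<h_j(N)$ one of the two quantitative lower bounds must hold, so either the supports are actually disjoint (term is zero) or a single integration by parts gives a $\tau_N^{-1/2}$ gain controlled uniformly; for $k\geq h_j(N)$ one falls back on the summable tail $\sum_{k\geq h_j(N)}b_k\kappa_k\to 0$. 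Without this two-regime argument — or an equivalent mechanism exploiting that $\mathbb V$ is a \emph{countable} set of \emph{pairwise distinct} rays so that the offending degeneracy is escaped at a controlled rate — the claimed limit is not established, and the lemma does not follow.
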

Applying the definition \eqref{correction_term}, we can split the expression for $K_N^{j}$ into three terms $$K_N^j=K_{1,N}^{j}+K_{2,N}^{j}+K_{3,N}^{j}$$ with
$$K_{1,N}^{j}=\sum_{k \neq j}b_{k}\,c_N\,\int_{(0,T)\times \tilde\Omega} e^{{\rm i}\tau_N(\xi_k-\xi_j)\cdot x}\,\zeta_{k,-}(t)\,v_{k,\tau_N}\,\tilde{w}_{j,N}\,dx\,dt,$$
$$K_{2,N}^{j}=\sum_{k \neq j}b_{k}\,\sum_{\ell\neq N}c_\ell\,\int_{(0,T)\times \tilde\Omega} e^{-{\rm i}\tau_N(-t+\xi_j\cdot x)}\,\zeta_{k,-}\,\mathcal U_{k,\tau_\ell}\,\tilde{w}_{j,N}\,dx\,dt,$$
$$K_{3,N}^{j}=\sum_{k\neq j}b_{k}\,\sum_{\ell=1}^{\infty}c_\ell\,\int_{(0,T)\times \tilde\Omega} e^{-{\rm i}\tau_N(-t+\xi_j\cdot x)}\,R_{k,\tau_\ell}\,\tilde{w}_{j,N}\,dx\,dt.$$

We emphasize that this step is justified since all three series converge absolutely. We proceed to bound each of the three terms above.
\subsubsection{Asymptotic analysis of $K_{1,N}^{j}$}
We show in this section that 
\begin{equation}
\label{K1Nbound}
\lim_{N \to\infty} \left|c_N^{-1} K_{1,N}^{j}- \sum_{k\neq j}b_{k} \int_{(0,T)\times \tilde{\Omega}}e^{{\rm i}\tau_N(\xi_k-\xi_j)\cdot x}\,\zeta_{k,-}(t)\,v_{k,\tau_N}\,\Box w_{j,N}\,dx\,dt \right| =0.
\end{equation}
Note that it suffices to show that 
\begin{equation}
\label{Kbound_rand}
\lim_{N\to \infty}\left| \sum_{k \neq j}b_{k}\,\int_{(0,T)\times \tilde\Omega}e^{{\rm i}\tau_N(\xi_k-\xi_j)\cdot x}\,\zeta_{k,-}(t)\,v_{k,\tau_N}\,\eta_j\,V\,w_{j,N}\,dx\,dt   \right|=0.
\end{equation}
Before proving this limit, we need to make a definition. For each $j,k\in \N$, we set
$$\theta_{k,j}:=\inf_{s,\tilde{s}\in\R}\dist{(p_{l_k}+s\xi_k,p_{l_j}+\tilde{s}\xi_j)},$$
where we recall that                                                              
$$\gamma_j(s)=(t_{k_j}+s,p_{l_j}+s\xi_j),\quad s\in\R.$$
Then, for all $j\in\N$, we define the function $h_{j}:\N\to \N$ through
$$ h_{j}(r) = \min \{k \in \N\setminus \{j\}\,:\,|\xi_k-\xi_j|<\tau_r^{-\frac{1}{2}},\quad \theta_{k,j}<(\delta_j+\delta_k)\,r^{-\frac{1}{2}}\},$$
This minimum always exists since $\mathcal T \times \mathcal P$ is dense in $(0,T)\times \p\Omega$ and the sequence $\{\delta_k\}_{k\in\N}$ is a decreasing sequence. We claim that
\begin{equation}
\label{density}
\lim_{r\to\infty} h_j(r) = \infty,\quad\quad  j \in \N.
\end{equation}
To show this, we suppose for contrary that there exists an integer $j$, a strictly increasing sequence $\{r_k\}_{k=1}^{\infty}$ and an integer $N_0$, such that $h_j(r_k)\leq N_0$ for all $k\in\N$. Note first that
$$\limsup_{k\to\infty}|\xi_{h_j(r_k)}-\xi_j|\leq \limsup_{k\to\infty}\tau_{r_k}^{-\frac{1}{2}}=0,$$
$$\limsup_{k\to\infty}\theta_{h_j(r_k),j}<\limsup_{k\to\infty}(\delta_j+\delta_{h_j(r_k)})\,r_k^{-\frac{1}{2}}\}=0.$$
Combining this with the fact that  the set $\{1,\ldots,N_0\}$ is finite, we deduce that there  exists an index $k_0$ such that for $k=h_j(r_{k_0})$ we have
$$ \xi_k=\xi_j\quad \text{and}\quad\inf_{s,\tilde{s}\in\R}\dist{(p_{l_k}+s\xi_k,p_{l_j}+\tilde{s}\xi_j)}=0.$$
But then $h_j(r_{k_0})=j$ which contradicts the definition of $h_j$. Thus, \eqref{density} holds.

We return to the expression \eqref{Kbound_rand} and rewrite it as 
\begin{multline}
\label{Kbound_rand_1}
\sum_{k< h_j(N),\,k\neq j} b_{k}\, \int_{(0,T)\times \tilde \Omega}\zeta_{k,-}\,\eta_j\,V\,w_{j,N}\,v_{k,\tau_N}\, e^{{\rm i} \tau_N(\xi_k-\xi_j)\cdot x}\,dx\,dt\\
+\sum_{k\geq h_j(N),\,k\neq j}b_{k}\, \int_{(0,T)\times \tilde \Omega}\zeta_{k,-}\,\eta_j\,V\,w_{j,N}\,v_{k,\tau_N}\, e^{{\rm i} \tau_N(\xi_k-\xi_j)\cdot x}\,dx\,dt.
\end{multline}
Let us begin by analyzing the first term in the expression \eqref{Kbound_rand_1}. We note that, from the definition of the map $h_j$, given any $k < h_j(N)$ with $k\neq j$, either \begin{equation}\label{tt1}|\xi_k-\xi_j|\geq \tau_N^{-\frac{1}{2}}\end{equation} or 
\begin{equation}\label{tt2}\inf_{s,\tilde{s}\in\R}\dist{(p_{l_k}+s\xi_k,p_{l_j}+\tilde{s}\xi_j)}\geq (\delta_j+\delta_{k})\,N^{-\frac{1}{2}}\end{equation} 
holds true. In the latter scenario, the terms in the summation vanish. To see this, note that the terms $w_{j,N}$ and $v_{k,\tau_N}$ are supported in tubular neighborhoods of $\gamma_j$ and $\gamma_{k}$ of radius $\frac{\delta_j}{2N}$ and $\frac{\delta_{k}}{2N}$ respectively. Therefore, the condition \eqref{tt2} implies that
$$ v_{k,\tau_N}\,\tilde{w}_{j,N}\equiv 0,\quad  N\in \N.$$
In the former scenario, integrating by parts, we get
$$\begin{aligned}&\int_{(0,T)\times \tilde \Omega}\zeta_{k,-}(t)\,\eta_j\,V\,w_{j,N}\,v_{k,\tau_N}\, e^{{\rm i} \tau_N(\xi_k-\xi_j)\cdot x}\,dx\,dt\\
&=\int_{(0,T)\times \tilde \Omega}\zeta_{k,-}(t)\,\eta_j\,V\,w_{j,N}\,v_{k,\tau_N}\, \frac{(\xi_k-\xi_j)\cdot\nabla_xe^{{\rm i} \tau_N(\xi_k-\xi_j)\cdot x}}{{\rm i} \tau_N\left|\xi_k-\xi_j\right|^2}\,dx\,dt\\
&=\frac{{\rm i}\int_{(0,T)\times \tilde \Omega}e^{{\rm i} \tau_N(\xi_k-\xi_j)\cdot x}\,\zeta_{k,-}(t)\,(\xi_k-\xi_j)\cdot\nabla_x\left[\eta_j\,V\,w_{j,N}\,v_{k,\tau_N}\right]\,dx\,dt}{ \tau_N\left|\xi_k-\xi_j\right|^2}.\end{aligned}$$
Then, \eqref{tt1} implies
$$\begin{aligned}&\left|\int_{(0,T)\times \tilde \Omega}\zeta_{k,-}(t)\,\eta_j\,V\,w_{j,N}\,v_{k,\tau_N}\, e^{{\rm i} \tau_N(\xi_k-\xi_j)\cdot x}\,dx\,dt\right|\\
&\leq C\tau_N^{-\frac{1}{2}}\|\eta_j\,V\,w_{j,N}\,v_{k,\tau_N}\|_{L^1(0,T;W^{1,1}(\tilde \Omega))}\\
&\leq C\tau_N^{-\frac{1}{2}}(\|\eta_j\,w_{j,N}\|_{L^2(0,T;H^1(\tilde \Omega))}\|v_{k,\tau_N}\|_{L^2((0,T)\times\tilde \Omega)}\\
&\quad\quad \quad\quad\quad \quad+\|\eta_j\,w_{j,N}\|_{L^2((0,T)\times\tilde \Omega)}\|v_{k,\tau_N}\|_{L^2(0,T;H^1(\tilde \Omega))}).\end{aligned}$$
Combining this with \eqref{amp_bounds} and \eqref{w_bound}, we obtain
$$\left|\int_{(0,T)\times \tilde \Omega}\zeta_{k,-}(t)\,\eta_j\,V\,w_{j,N}\,v_{k,\tau_N}\, e^{{\rm i} \tau_N(\xi_k-\xi_j)\cdot x}\,dx\,dt\right|\leq C\kappa_j\kappa_kN\tau_N^{-\frac{1}{2}}.$$
According to the above discussion, this last estimate holds true for all $k < h_j(N)$ with $k\neq j$. Taking the sum, we deduce that
$$\begin{aligned}&\left|\sum_{k< h_j(N),\,k\neq j} b_{k}\, \int_{(0,T)\times \tilde \Omega}\zeta_{k,-}\,\eta_j\,V\,w_{j,N}\,v_{k,\tau_N}\, e^{{\rm i} \tau_N(\xi_k-\xi_j)\cdot x}\,dx\,dt\right|\\
&\leq C\kappa_jN\tau_N^{-\frac{1}{2}}\sum_{k< h_j(N),\,k\neq j} b_{k}\kappa_k\leq C\kappa_jN\tau_N^{-\frac{1}{2}}.\end{aligned}$$
Therefore, we have 
\begin{equation}\label{tt3}\lim_{N\to\infty}\sum_{k< h_j(N),\,k\neq j} b_{k}\, \int_{(0,T)\times \tilde \Omega}\zeta_{k,-}\,\eta_j\,V\,w_{j,N}\,v_{k,\tau_N}\, e^{{\rm i} \tau_N(\xi_k-\xi_j)\cdot x}\,dx\,dt=0.\end{equation}
We now consider the second term in \eqref{Kbound_rand_1}. We write
\begin{multline*}
\left|\sum_{k\geq h_j(N),\,k\neq j}b_{k}\,\int_{(0,T)\times \tilde \Omega}\zeta_{k,-}(t)\,\eta_j(x)\,V\,w_{j,N}\,v_{k,\tau_N}\, e^{{\rm i} \tau_N(\xi_k-\xi_j)\cdot x}\,dx\,dt\right|\\
\leq\sum_{k\geq h_j(N)}b_{k}\,\|v_{k,\tau_N}\,w_{j,N}\|_{L^1((0,T)\times\tilde\Omega)}\|\zeta_{k,-}\,\eta_j\,V\|_{L^{\infty}((0,T)\times\tilde\Omega)}\leq C\,\kappa_j\,\sum_{k\geq h_j(N)}\kappa_{k}b_{k}.
\end{multline*}
Here, in the last step we used the Cauchy-Schwarz inequality together with the bounds \eqref{amp_bounds} and \eqref{w_bound} to write
$$\|v_{k,\tau_N}\,w_{j,N}\|_{L^1((0,T)\times\tilde\Omega)}\leq \|v_{k,\tau_N}\|_{L^2((0,T)\times \tilde\Omega)}\|\,w_{j,N}\|_{L^2((0,T)\times \tilde\Omega)}\leq C \kappa_k\kappa_j.$$
Now, applying \eqref{b_sequence} and \eqref{density}, we conclude that 
$$ \lim_{N\to\infty} \sum_{k\geq h_j(N)}\kappa_{k}b_{k}=0.$$
Combining this with \eqref{Kbound_rand_1} and \eqref{tt3}, we deduce that \eqref{Kbound_rand} holds true. This concludes our asymptotic analysis of $K_{1,N}^{j}$ showing that \eqref{K1Nbound} is fulfilled.

\subsubsection{Asymptotic analysis of $K_{2,N}^{j}$} 
 We write 
$$ |K_{2,N}^j|=\left|\sum_{k\neq j}b_{k}\sum_{\ell \neq N}c_\ell \int_{(0,T)\times \tilde \Omega}\zeta_{k,-}\,\tilde{w}_{j,N}\,v_{k,\tau_\ell}\, e^{{\rm i} S(t,x)}\,dx\,dt\right|.$$
where we are using the shorthand notation $$S(t,x)=-(\tau_\ell-\tau_N)t+(\tau_\ell\xi_k-\tau_N\xi_j)\cdot x.$$
Using integration by parts with respect to the time variable, this reduces as follows.
$$\begin{aligned}
|K_{2,N}^j|&=\left|\sum_{k\neq j}b_{k}\sum_{\ell\neq N}c_\ell\, (\tau_\ell-\tau_N)^{-2}\int_{(0,T)\times \tilde \Omega} \p^2_t\left(\zeta_{k,-}\,\tilde{w}_{j,N}\,v_{k,\tau_\ell}\right)\, e^{{\rm i} S(t,x)}\,dx\,dt\right|\\
&\leq 4\sum_{k\neq j}b_{k}\sum_{\ell\neq N} c_\ell\, \tau_N^{-2}\| \zeta_{k,-}\,v_{k,\tau_\ell}\,\tilde{w}_{j,N}\|_{H^2((0,T)\times\tilde\Omega)}\\
&\leq\sum_{k\neq j}4\kappa_{j}\,b_{k}\kappa_{k} \tau_N^{-2} N^{4}(\sum_{\ell=1}^{\infty}c_\ell\,\ell^2)\leq C\kappa_j\,\tau_N^{-2} N^{4}.
\end{aligned}$$
where we have used estimates \eqref{alt_w_bound}--\eqref{tau_diff}. Thus, we obtain that
$$|c_N^{-1}\,K_{2,N}^{j}|\leq C\,\kappa_j\,c_N^{-1}\tau_N^{-2}N^{4}\leq C\,\kappa_j\,\tau_N^{-1}N^{7}$$
which implies that
\begin{equation}
\label{K2Nbound}
\lim_{N\to\infty}|c_N^{-1}\,K_{2,N}^{j}|=0.\end{equation}
\subsubsection{Asymptotic analysis of $K_{3,N}^{j}$}
To bound $K_{3,N}^{j}$ we write
\[
\begin{aligned}
\label{Kbound3}
&\left|\sum_{k\neq j}b_{k}\sum_{\ell=1}^{\infty}c_\ell \,\left(\int_{(0,T)\times \tilde\Omega}e^{-{\rm i}\tau_N(-t+\xi_j\cdot x)} \, R_{k,\tau_\ell}\,\tilde{w}_{j,N}\,dx\right)\right|\\
&=\left|\sum_{k\neq j}b_{k}\,\sum_{\ell=1}^{\infty} c_\ell\,\tau_N^{-2}\int_{(0,T)\times\tilde\Omega} \p^2_t\left(\tilde{w}_{j,N}\,R_{k,\tau_\ell}\right)e^{-{\rm i}\tau_N(-t+\xi_j\cdot x)}\,dx\right|\\
&\leq\sum_{k\neq j}b_{k}\,\sum_{\ell=1}^{\infty}c_\ell\,\tau_N^{-2} \|R_{k,\tau_\ell}\|_{H^2((0,T)\times\tilde\Omega)}\|\tilde{w}_{j,N}\|_{H^2((0,T)\times\tilde\Omega)}\\
&\leq \sum_{k\neq j}\kappa_j\,\kappa_{k}\,b_{k}\tau_N^{-2}N^{4} (\sum_{\ell=1}^{\infty} c_\ell\,\tau_\ell^{-1}\ell^6)\leq C\kappa_j\,\tau_N^{-2}N^{4}
\end{aligned}
\]
where we have used the bounds \eqref{correction_bound} and \eqref{w_bound}. Thus we obtain
\begin{equation}\label{K3Nbound}|c_N^{-1}K_{3,N}^{j}|\leq C\,\kappa_j\, c_N^{-1}\tau^{-2}_N\,N^{4}\leq C\,\kappa_j\,\tau_N^{-1}N^{7}.\end{equation}
We are now ready to prove Lemma~\ref{lem_K_bound} as follows.
\begin{proof}[Proof of Lemma~\ref{lem_K_bound}]
Note that by combining the estimate \eqref{K3Nbound} with \eqref{K1Nbound} and \eqref{K2Nbound}, we have shown that 
$$\lim_{N\to \infty} \left|c_N^{-1}\,K_N^{j}-\sum_{k\neq j}b_{k} \int_{(0,T)\times \tilde{\Omega}}e^{{\rm i}\tau_N  (\xi_k-\xi_j)\cdot x}\, \eta_j\,\zeta_{k,-}\,v_{k,\tau_N}\,\Box w_{j,N}\,dx\,dt\right|=0.$$
Applying the estimates \eqref{w_bound} and \eqref{a_approx}  together with the convergence of the series \eqref{b_sequence}, we observe that
\begin{multline*}\left|\sum_{k\neq j}b_{k} \int_{(0,T)\times \tilde{\Omega}}e^{{\rm i}\tau_N  (\xi_k-\xi_j)\cdot x}\, \eta_j\,\zeta_{k,-}\,(v_{k,\tau_N}-v_{k,\tau_N}^{(0)})\,\Box w_{j,N}\,dx\,dt\right| \\
\leq C\kappa_j\,N^4\tau_N^{-1} (\sum_{k=1}^{\infty}\kappa_k b_k), \end{multline*}
which converges to zero as $N$ approaches infinity. The claim follows immediately.
\end{proof}
With the proof of Lemmas \ref{lem_J_bound}--\ref{lem_K_bound} completed, we are ready to state the proof of Theorem~\ref{t1}.
\begin{proof}[Proof of Theorem~\ref{t1}]
Let $j \in \N$ corresponding to some $\gamma_j \in \mathbb V$. Combining the definition of $I^j_N$ in terms of $J^j_N$ and $K^j_N$ as given by \eqref{JK_def} together with Lemma~\ref{lem_J_bound}--\ref{lem_K_bound}, we obtain that
$$ \lim_{N\to \infty} \left|c_N^{-1}\,I_N^{j}-S^{j}_N-  b_j\,\int_{(0,T)\times \tilde \Omega}\zeta_{j,-}\,V\,\eta_{j}\,w_{j,N}v_{j,\tau_N}^{(0)}\,dx\,dt \right|=0.$$
Note that $v^{(0)}_{j,\tau_N}=w_{j,N}$. We proceed to study the expression 
\begin{equation}
\label{weak_conv} 
\int_{(0,T)\times \tilde \Omega}\eta_j\,\zeta_{j,-}\,V\,w^2_{j,N}\,dx\,dt.
\end{equation}
To simplify this expression, we introduce the new coordinate system $$(t,x^1,\ldots,x^n) \mapsto y=(y^0,\ldots,y^n)$$ on $\R^{1+n}$ that is defined by
$$(t,x^1,\ldots,x^n)= q_{j}+y^0\,\alpha_j^\star+y^1\,\alpha_{j}+\sum_{k=2}^{n}y^{k}\,e_{j,k-1}.$$
Here, $\alpha_j,\alpha_j^*\in\mathbb S^n=\{z\in\R^{1+n}:\ |z|=1\}$ are given by
$$\alpha_j=\frac{\sqrt{2}}{2}\,(-1,\xi_j),\quad \alpha_j^*=\frac{\sqrt{2}}{2}\,(1,\xi_j).$$
Note that in the $y$-coordinate system the points on the light ray $\gamma_{j}$ are given by $y^1=\ldots=y^n=0.$
Using this coordinate system together with the definitions of $v_{j,0}$ and $w_{j,N}$, the expression \eqref{weak_conv} reduces to
$$\int_{\R^{1+n}}\left(\frac{N}{\delta_j}\right)^{n}\,\eta_j(y)\,\zeta_{j,-}(y)\,V(y)\chi^2(N\,\delta_j^{-1}\sqrt{2}y^1)\left(\prod_{k=2}^{n}\chi^2(N\,\delta_j^{-1}y^k)\right)\,dy.$$
Taking the limit as $N\to \infty$ and noting that both $\eta_j$ and $\zeta_{j,-}$ are identical to one on the segment of $\gamma_j$ that lies inside  $(0,T)\times\Omega$, we obtain:
$$\begin{aligned}&\lim_{N\to\infty}\int_{\R^{1+n}}\left(\frac{N}{\delta_j}\right)^{n}\eta_j(y)\,\zeta_{i,-}(y)\,V(y)\chi^2(N\,\delta_j^{-1}\sqrt{2}y^1)\,\prod_{k=2}^{n}\chi^2(N\,\delta_j^{-1}\,y^k)\,dy\\
&= \frac{\sqrt{2}}{2}\int_\R V(y^0,0,\ldots,0)\,dy^0\end{aligned}$$
where we used \eqref{cutoff} in the last step. This completes the proof of the theorem.
\end{proof}
\section{Proofs of main results}
\label{proof_section}
This section is devoted to the proof of the main results stated in Theorem \ref{t0} and Corollary \ref{c1}. For this purpose, we will combine all the arguments of the previous sections. We start with Theorem~\ref{t0}.

\begin{proof}[Proof of Theorem~\ref{t0}] Let $f\in L^2(\R^{1+n})$ be the source term given by \eqref{universal_source} and let $V_j \in \mathcal C^4([0,T]\times\R^n)\cap \mathcal C([0,T];\mathcal C^4_0(\Omega))$, $j=1,2$. We consider also $u_j\in \mathcal C^1([0,T];L^2(\R^n)) \cap \mathcal C([0,T];H^1(\R^n))$ solving \eqref{pf} with $V=V_j$, $j=1,2$. Assuming that the condition
\begin{equation}\label{t0b}u_1(t,x)=u_2(t,x),\quad (t,x)\in(0,T)\times(\tilde{\Omega}\setminus\overline{\Omega}))\end{equation}
is fulfilled, we will prove that $V_1=V_2$ on $\mathcal D$.
We start by observing that \eqref{t0b} combined with Theorem~\ref{t1} imply
\begin{equation}\label{V_gamma} \int_{\R} (V_1-V_2)(\gamma_j(s))\,ds=0\quad \forall\, \gamma_j \in \mathbb V.
\end{equation}
Let $\gamma:\R\to\R^{1+n}$ be any future pointing light ray such that its intersection with $(0,T)\times \Omega$ lies inside $\mathcal D$. We write
$$\gamma(s) = \gamma(0)+ s\,(1,\xi)$$
for some $\gamma(0) \in (0,T)\times \p \Omega$ and some unit vector $\xi \in \R^n$. Recall that all light rays $\gamma_j \in \mathbb V$ can be written in the form
$$ \gamma_j(s)= (t_{k_j}+s,s\,\xi_j+p_{l_j})$$
for some sequences $\{k_j\}_{j=1}^{\infty}$, $\{l_j\}_{j=1}^{\infty}$, $\{m_j\}_{j=1}^{\infty}$ and where $\xi_j=\frac{p_{m_j}-p_{l_j}}{|p_{l_j}-p_{m_j}|}$. Applying the density of $\mathcal T \times \mathcal P$ in $(0,T)\times \p \Omega$, it follows that there exists a sub-sequence $\{j_\ell\}_{\ell=1}^{\infty}\subset \N$ such that 
$$\lim_{\ell\to\infty}(t_{k_{j_\ell}},p_{l_{j_\ell}})= \gamma(0)$$
and such that 
$$ \lim_{\ell\to\infty} \xi_{j_\ell}=\xi.$$
Thus, using continuity of $V_1-V_2$ together with \eqref{V_gamma}, it follows that
$$ \int_{\R} (V_1-V_2)(\gamma(s))\,ds=0$$
for all light rays $\gamma$ in $\mathcal D$. Finally, applying the injectivity of the light ray transform (see for example \cite[Theorem 2.1]{Ste}) we deduce that
$$ V_1 = V_2 \quad \text{on $\mathcal D$}.$$
This completes the proof of the theorem.
\end{proof}

\begin{proof}[Proof of Corollary~\ref{c1}] Let $f\in L^2(\R^{1+n})$ be the source term given by \eqref{universal_source} and let $V_j \in \mathcal C^4([0,T]\times\R^n)\cap \mathcal C([0,T];\mathcal C^4_0(\Omega))$, $j=1,2$. We consider also $u_j\in \mathcal C^1([0,T];L^2(\R^n)) \cap \mathcal C([0,T];H^1(\R^n))$ solving \eqref{pf} with $V=V_j$, $j=1,2$. Assuming that the condition
$$u_1(t,x)=u_2(t,x),\quad (t,x)\in(0,T)\times\mathcal O)$$
is fulfilled, we will prove that $V_1=V_2$ on $\mathcal D_{T_1}$. Consider $u=u_1-u_2$ and notice that $u$ satisfies
\begin{equation}\label{c1a}\left\{\begin{array}{ll}\partial_t^2u-\Delta_x u=0,\quad &\textrm{in}\ (0,T)\times(\tilde{\Omega}\setminus\overline{\Omega}),\\  u(0,\cdot)=\partial_tu(0,\cdot)=0,\quad &\textrm{in}\ \R^n,\\ u=0,\quad &\textrm{on}\ (0,T)\times\mathcal O.\end{array}\right.\end{equation}
Now let us consider $\tilde{u}$ defined on $[-T,T]\times \R^n$ by $\tilde{u}=u$  on $[0,T]\times \R^n$ and 
$$\tilde{u}(-t,x)=u(t,x),\quad (t,x)\in[0,T]\times \R^n.$$
Since $u(0,\cdot)=\partial_tu(0,\cdot)=0$, we deduce that $\tilde{u}\in H^1(\R^{1+n})$ and \eqref{c1a} implies
\begin{equation}\label{c1b}\left\{\begin{array}{ll}\partial_t^2\tilde{u}-\Delta_x \tilde{u}=0,\quad &\textrm{in}\ (-T,T)\times(\tilde{\Omega}\setminus\overline{\Omega}),\\   \tilde{u}=0,\quad &\textrm{on}\ (0,T)\times\mathcal O.\end{array}\right.\end{equation}
Applying the global Holmgren uniqueness theorem for hyperbolic equations (see e.g. \cite[Theorem 3.11]{KKL} or \cite[Theorem 2.2]{KMO} ), which is a consequence of the well known local unique continuation result of \cite[Theorem 1]{Ta}, we deduce that 
$$u(t,x)=\tilde{u}(t,x)=0,\quad t\in(0,T),\ x\in\{y\in\tilde{\Omega}\setminus\overline{\Omega}:\ \textrm{\emph{dist}}(y,\mathcal O)<T-t\}.$$
In particular, \eqref{c1a} implies that $u=0$ on $(0,T_1)\times (\tilde{\Omega}\setminus\overline{\Omega})$. Therefore, we have
$$u_1(t,x)=u_2(t,x),\quad (t,x)\in(0,T_1)\times(\tilde{\Omega}\setminus\overline{\Omega}))$$
and, repeating the arguments of Theorem \ref{t0} with $T$ replaced by $T_1$, we deduce that $V_1=V_2$ on $\mathcal D_{T_1}$. This completes the proof of the corollary.\end{proof}

------------------------------------------------------

\section*{Acknowledgments}
A.F acknowledges the support from the EPSRC grant EP/P01593X/1. Y.K. acknowledges the support from the Agence Nationale de la Recherche (project MultiOnde) grant ANR-17-CE40-0029.

\bibliographystyle{abbrv}

\begin{thebibliography}{99}

\bibitem{AS} G. Alessandrini and J. Sylvester,  Stability for a multidimensional inverse spectral theorem,. Comm. Partial Differential
Equations, \textbf{15} (5) (1990), 711-736.
\bibitem{AB}
S. Alinhac and M.S. Baouendi, A non uniqueness result for operators of principal type,
Math. Z., \textbf{220} (1995), 561-568.
\bibitem{AS}
S. Avdonin and T. Seidman, Identification of $q(x)$ in $u_t=\Delta u-q u$ from boundary observations, SIAM J. Control Optim. \textbf{33} (1995) 1247-1255.
\bibitem{Be}
A. Begmatov. A certain inversion problem for the ray transform with incomplete data. Siberian Math. Journal, \textbf{42} (3) (2001),428-
434.
\bibitem{Bel}
M. I. Belishev, An approach to multidimensional inverse problems for the wave equation, Dokl. Akad. Nauk SSSR, \textbf{297} (3)(1987), 524-527.
\bibitem{BelK}
M. I. Belishev and Y. V. Kurylev. To the reconstruction of a Riemannian manifold via its spectral data (BC-method).
Comm. Partial Differential Equations, \textbf{17} (1992), 767-804. 
\bibitem{Bell} M. Bellassoued, Global logarithmic stability in inverse hyperbolic problem by arbitrary boundary observation, Inverse Problems, \textbf{20} (2004), 1033.
\bibitem{BCY} M. Bellassoued, M. Choulli, M. Yamamoto,  Stability estimate for an inverse wave equation and a multidimensional Borg-Levinson theorem, J. Diff. Equat., \textbf{247}(2)  (2009), 465-494. 
\bibitem{BY} M. Bellassoued and M. Yamamoto,  Logarithmic stability in determination of acoefficient in an acoustic equation by arbitraryboundary observation,  J. Math. Pures Appl., \textbf{85} (2006) 193-224.
\bibitem{BY}
 M. Bellassoued and M.Yamamoto, Determination of a coeffcient in the wave equation with a
single measurment, Applicable Analysis, \textbf{87} (2008), 901-920.
\bibitem{Ben}
 I. Ben Aicha, Stability estimate for hyperbolic inverse problem with time-dependent coefficient. Inverse Problems, \textbf{31} (2015),
125010.
\bibitem{BK}
A. L. Bukhgeim and M. V. Klibanov, Uniqueness in the large of a class of multidimensional
inverse problems, Dokl. Akad. Nauk SSSR \textbf{260}:2 (1981), 269–272. In Russian; translated in Sov. Math. Dokl. \textbf{24} (1981),
224-227.
\bibitem{CH} T. Cazenave and A. Haraux,  An introduction to semilinear evolution equations, Clarenson press-Oxford, 1998.
\bibitem{CY} J. Cheng and M. Yamamoto, Identification of convection term in a parabolic equation with a single measurement, Nonlinear Anal., \textbf{50} (2002), 163-171.
\bibitem{E1} G. Eskin,   A new approach to hyperbolic inverse problems,  Inverse Problems, \textbf{22} no. 3 (2006), 815-831.
\bibitem{E2} G. Eskin,  Inverse hyperbolic problems with time-dependent coefficients, Commun. Partial Diff. Eqns., \textbf{32}  (11) (2007), 1737-1758.
\bibitem{E3} G. Eskin, {\em Inverse problems for general second order hyperbolic equations with time-dependent coefficients}, Bulletin of Mathematical Sciences, \textbf{7} (2017), 247-307.
\bibitem{FIKO}
A. Feizmohammadi, J. Ilmavirta, Y. Kian, L. Oksanen, Recovery of time-dependent coefficients from boundary data for hyperbolic equations, to appear in Journal of Spectral theory, 2020. 
\bibitem{FIO}
A. Feizmohammadi, J. Ilmavirta, L. Oksanen, The light ray transform in stationary and static Lorentzian geometries, arXiv preprint, 2019.
\bibitem{FK}
A. Feizmohammadi and Y. Kian, Recovery of nonsmooth coefficients appearing in anisotropic wave equations, SIAM J. Math. Anal., \textbf{51} (2019), 4953–4976.
\bibitem{HLO}
T. Helin, M. Lassas,  L. Oksanen, An inverse problem for the wave equation with one measurement and the pseudorandom source, Anal. PDE, \textbf{5} (2012), 887–912.
\bibitem{HLYZ}
T. Helin, M. Lassas, L. Ylinen, Z. Zhang, Inverse problems for heat equation and space-time fractional diffusion equation with one measurement, preprint, arXiv:1903.04348.
\bibitem{IY}
O. Yu. Imanuvilov and M. Yamamoto, Determination of a coefficient in an acoustic equation with single measurement, Inverse Problems, \textbf{19} (2003), 157-171.
\bibitem{I} V. Isakov,  Completness of products of solutions and some inverse problems for PDE, J. Diff. Equat., \textbf{92} (1991), 305-316.
\bibitem{I1}  V. Isakov,  An inverse hyperbolic problem with many boundary measurements,  Commun. Partial Diff. Eqns., \textbf{16 }(1991),  1183-1195.
\bibitem{KKL}  A. Katchalov,  Y. Kurylev, M.  Lassas,  Inverse boundary spectral problems, Chapman \& Hall/CRC, Boca Raton, FL, 2001, 123, xx+290.
 \bibitem{Ki} Y. Kian,  Stability of the determination of a coefficient for wave equations in an infinite waveguide, Inverse Probl. Imaging, \textbf{8} (3) (2014),  713-732.
\bibitem{Kian}
Y. Kian, Stability in the determination of a time-dependent coefficient for wave equations from
partial data, Journal of Mathematical Analysis and Applications, \textbf{36} (2016), 408-428.
\bibitem{Kian1}
Y. Kian, Unique determination of a time-dependent potential for wave equations from partial
data, Annales de l'IHP (C) Nonlinear Analysis, \textbf{34} (2017), 973-990.
 \bibitem{Kian2} Y. Kian,  Recovery of time-dependent damping coefficients and potentials appearing in wave equations from partial data, SIAM J. Math. Anal., \textbf{48} (6) (2016), 4021-4046.
\bibitem{Kian3}Y. Kian, On the determination of nonlinear terms appearing in semilinear hyperbolic equations , preprint, arXiv:1807.02165.
\bibitem{KKLO} Y. Kian, Y. Kurylev, M. Lassas, L. Oksanen, Unique recovery of lower order coefficients for hyperbolic
equations from data on disjoint sets, J. Differential Equations,  \textbf{267} (2019), 2210-2238.
\bibitem{KLLY}
Y. Kian, Z. Li, Y. Liu, M. Yamamoto, Unique determination of several coefficients in a fractional diffusion(-wave) equation by a single measurement, arXiv preprints (2019).
\bibitem{KMO}Y. Kian, M. Morancey, L. Oksanen,  Application of the boundary control method to partial data Borg-Levinson inverse spectral problem, , MCRF, \textbf{9} (2019),  289-312.
\bibitem{KO}
Y. Kian and L. Oksanen, Recovery of time-dependent coefficient on Riemannian manifold for hyperbolic equations, Int. Math. Res. Not.,  \textbf{2019} (2019), Issue 16,  5087-5126.
\bibitem{Klibanov0}
M. V. Klibanov, Inverse problems and Carleman estimates, Inverse Problems, \textbf{8} (1992), 575–596.
\bibitem{LO} M. Lassas, L.Oksanen, Inverse problem for the Riemannian wave equation with Dirichlet data and Neumann data on disjoint sets, Duke Math. J., \textbf{163} (2014), 1071-1103.
\bibitem{LOSG}
M. Lassas, L. Oksanen, P. Stefanov, G. Uhlmann, The Light Ray transform on Lorentzian manifolds, to appear in Commun. Math. Phys.,  https://doi.org/10.1007/s00220-020-03703-6
\bibitem{Ra1}
Rakesh, An inverse problem for a layered medium with a point source, Inverse Problems, \textbf{19} (2003), 497–506.
\bibitem{Ra2}
Rakesh, Inverse problems for the wave equation with a single coincident source-receiver pair, Inverse Problems
\textbf{24} (2008), 015012. 
\bibitem{RR}  Rakesh and A. G. Ramm, {\em  Property C and an Inverse Problem for a Hyperbolic Equation}, J. Math. Anal. Appl., \textbf{156} (1991), 209-219. 
\bibitem{RSacks}
Rakesh and P. Sacks, Uniqueness for a hyperbolic inverse problem with angular control on the
coefficients, J. Inverse Ill-Posed Probl., \textbf{19} (2011), 107–126.
\bibitem{RS1} Rakesh and W. Symes, {\em Uniqueness for an inverse problem for the wave equation}, Commun. Partial Diff. Eqns., \textbf{13} (1)  (1988), 87-96.
\bibitem{RS}
A. G. Ramm and Sj{\"o}strand, An inverse inverse problem of the wave equation, Math. Z., \textbf{206} (1991),
119-130.
 \bibitem{RY} V. G. Romanov and M. Yamamoto, Multidimensional inverse hyperbolic problem with impulse input and a single boundary measurement, J. Inverse Ill-Posed Problems, \textbf{7}  (1999), 573-88.
\bibitem{SS}
P. Sacks and W. Symes, Uniqueness and continuous dependence for a multidimensional hyperbolic
inverse problem, Comm. Partial Differential Equations, \textbf{10} (1985), 635-676.
\bibitem{Sa}
R. Salazar. Determination of time-dependent coefficients for a hyperbolic inverse problem. Inverse Problems, \textbf{29} (2013), 095015.
\bibitem{St}  P. Stefanov,  Uniqueness of the multi-dimensional inverse scattering problem for time-dependent potentials, Math. Z., \textbf{201} (4) (1989), 541-559.
\bibitem{Ste}
P. Stefanov, Support theorems for the light ray transform on analytic Lorentzian manifolds, Proc. Amer.
Math. Soc., \textbf{145} (2017), pp. 1259–1274.
\bibitem{Ste2} 
P. Stefanov and G. Uhlmann, Recovery of a source term or a speed with one measurement
and applications, Trans. AMS, \textbf{365}  (2013), 5737-5758.
\bibitem{SU1} P. Stefanov and G. Uhlmann,  Stability estimates for the hyperbolic Dirichlet to Neumann map in anisotropic media,
J. Funct. Anal., \textbf{154} (1998), 330-358.
\bibitem{SU2}  P. Stefanov and G. Uhlmann,  Stable determination of the hyperbolic Dirichlet-to-Neumann map
for generic simple metrics,  International Math Research Notices 
(IMRN), {\bf 17} (2005), 1047-1061.
\bibitem{Ta}
D. Tataru. Unique continuation for solutions to PDE's; between H{\"o}rmander's theorem and Holmgren's theorem. Comm.
Partial Differential Equations, \textbf{20} (1995),855–884.
\bibitem{Ya} M. Yamamoto,  Uniqueness and stability in multidimensional hyperbolic inverse problems,
J. Math. Pure Appl., \textbf{78} (1999), 65-98.

\end{thebibliography}

\end{document}